\newtheorem{thm}{Theorem}[section]
\newtheorem{lem}[thm]{Lemma}
\newtheorem{cor}[thm]{Corollary}
\newtheorem{prop}[thm]{Proposition}
\newtheorem{ex}[thm]{Example}
\newtheorem{qu}{Question}
\theoremstyle{definition}
\newtheorem{defi}[thm]{Definition}
\theoremstyle{remark}
\newtheorem{rem}[thm]{Remark}
\newtheorem*{rem*}{Remark}
\DeclareMathOperator{\Aut}{Aut}
\newcommand{\kringel}{\mathbin{\raise1pt\hbox{$\scriptstyle\circ$}}}
\newcommand{\pkt}{\mathbin{\raise0pt\hbox{$\scriptstyle\bullet$}}}
\newcommand{\C}{\mathbb{C}}
\newcommand{\ad}{\mathop{\rm ad}}
\newcommand{\Der}{\mathop{\rm Der}}
\newcommand{\diag}{\mathop{\rm diag}}
\newcommand{\La}{\mathfrak{a}}
\newcommand{\Lf}{\mathfrak{f}}
\newcommand{\Lg}{\mathfrak{g}}
\newcommand{\Lh}{\mathfrak{h}}
\newcommand{\Ll}{\mathfrak{l}}
\newcommand{\Ln}{\mathfrak{n}}
\newcommand{\Lr}{\mathfrak{r}}
\newcommand{\Ls}{\mathfrak{s}}
\newcommand{\CT}{\mathcal{T}}
\newcommand{\al}{\alpha}
\newcommand{\be}{\beta}
\newcommand{\de}{\delta}
\newcommand{\la}{\lambda}
\newcommand{\ra}{\rightarrow}
\renewcommand{\phi}{\varphi}
\begin{document}


\title[Derivation double Lie algebras]{Derivation double Lie algebras}

\author[D. Burde]{Dietrich Burde}
\address{Fakult\"at f\"ur Mathematik\\
Universit\"at Wien\\
Oskar-Morgenstern-Platz 1\\
1090 Wien \\
Austria}
\email{dietrich.burde@univie.ac.at}

\date{\today}

\subjclass[2000]{Primary 17B60, 17B01}
\keywords{Post-Lie algebra, Derivation Double Lie algebra, Classical R-matrix}

\begin{abstract}
We study classical $R$-matrices $D$ for Lie algebras $\Lg$ such that $D$ is also a derivation of $\Lg$. 
This yields derivation double Lie algebras $(\Lg,D)$.
The motivation comes from recent work on post-Lie algebra structures on pairs of Lie algebras
arising in the study of nil-affine actions of Lie groups. We prove that there are no nontrivial 
simple derivation double Lie algebras, and study related Lie algebra identities for arbitrary
Lie algebras.
\end{abstract}

\maketitle

\section{Introduction}
Let $\Lg$ be a finite-dimensional Lie algebra over a field $K$ of characteristic zero.
Motivated by studies in post-Lie algebras \cite{BU41}, \cite{BU44} we are interested in the following question. 

\begin{qu}\label{qu1}
Let $\Lg$ be a Lie algebra. For which derivations $D$ of $\Lg$ does the skew-symmetric bilinear map
\[
[x,y]_D=D([x,y])
\]
satisfy the Jacobi identity ?
\end{qu}

In other words, for which derivations $D$ defines $[x,y]_D$ another Lie algebra, denoted by $\Lg_D$ ?
If  $[x,y]_D$ is a Lie bracket, then the linear map $D$ is also an example
of a {\it classical $R$-matrix} for $\Lg$, i.e., a linear transformations $R\colon \Lg\ra\Lg$
such that
\[
[x,y]_R=[R(x),y]+[x,R(y)]
\]
defines a Lie bracket. Classical $R$-matrices \cite{SEM} have been studied by many authors. Our main
result here is that for simple Lie algebras $\Lg$ of rank $r\ge 2$ over an algebraically closed field of 
characteristic zero, $[x,y]_D=D([x,y])$ is a Lie bracket 
only for the trivial derivation $D=0$, see Theorem $\ref{3.3}$. On the other hand, for $\Ls\Ll_2(K)$ this is 
a Lie bracket for all derivations $D\in \Der(\Ls\Ll_2(K))$. \\[0.2cm]
Post-Lie algebra structures have been introduced in the context of nil-affine actions of Lie groups
in \cite{BU41}, and also in connection with homology of partition posets and the study of 
Koszul operads in \cite{LOD}, \cite{VAL}. Such structures are important in many
areas of algebra, geometry and physics. They generalize both LR-structures and pre-Lie algebra structures, 
which we have studied in \cite{BU22}, \cite{BU24}, \cite{BU33}, \cite{BU34}, \cite{BU38}. Related topics are Poisson
structures and Lie bialgebra structures, which have been studied as well in connection with classical
$R$-matrices and double Lie algebras, see \cite{AGM}. \\[0.2cm]
Let us explain the motivation for question $\ref{qu1}$ in terms of post-Lie algebra structures.
In \cite{BU44}, Theorem $6.4$ we have determined all complex semisimple Lie algebras $\Lg$ with Lie bracket $[\, ,\,]$, 
and all $\la\in \C$, $z\in \Lg$ such that
\[
[x,y]_R = [z,[x,y]]+(2\la+1)[x,y]
\]
defines a Lie bracket, and at the same time the Lie bracket $[\,  ,\,]$ satisfies
\[
[[z,x],[z,y]]  = [z,[z,[x,y]]]+(2\la+1)[z,[x,y]]+(\la^2+\la)[x,y] 
\]
for all $x,y\in \Lg$. 
It turned out that this always implies $z=0$, except for the case where $\la=-\frac{1}{2}$, and the Lie algebra is 
isomorphic to a direct sum of $\Ls\Ll_2(\C)$'s. For $\la=-\frac{1}{2}$ the second Lie bracket is given
by $[x,y]_D=[z,[x,y]]=D([x,y])$ with the inner derivation $D=\ad (z)$. This motivated the question
for which $z\in \Lg$ the  endomorphism $D=\ad (z)$ is a classical $R$-matrix. \\
Instead of $D=\ad(z)$ we consider here an arbitrary derivation $D$ of any Lie algebra $\Lg$, and study the 
Jacobi identity for the skew-symmetric bilinear map $[x,y]_D=D([x,y])$. \\[0.2cm]
Question $\ref{1}$ naturally leads to another question, which is in particular also interesting
for solvable and nilpotent Lie algebras:

\begin{qu}\label{qu2}
Which Lie algebras $\Lg$ have the property that
$[x,y]_D=D([x,y])$ satisfies the Jacobi identity for all derivations $D\in \Der(\Lg)$ ?
\end{qu}

This is related to the theory of Lie algebra identities, and in particular to the
variety ${\rm var}(\Ls\Ll_2(K))$, see \cite{DRE}, \cite{FIL1}, \cite{FIL2}. 
We study this question together with the related identities \eqref{1}, \eqref{2}, \eqref{3} and \eqref{4}
for some general classes of Lie algebras, and in particular
for all complex nilpotent Lie algebras of dimension $n\le 7$. A Lie algebra has the property given in question 
$\ref{qu2}$ if and only if identity \eqref{1} holds for it, for all derivations.
We show that every almost abelian Lie algebra
satisfies the Hom-Jacobi identity \eqref{2}, and hence also \eqref{1}, see Proposition $\ref{4.6}$.
Finally we prove that every complex CNLA (characteristically nilpotent Lie algebra) of dimension $7$ is a derivation 
double Lie algebra for all derivations, see Proposition $\ref{4.12}$.

\section{Preliminaries}

Let $\Lg$ be a finite-dimensional Lie algebra over a field $K$ of characteristic zero.
Let $\Lg^0=\Lg$, and $\Lg^i=[\Lg,\Lg^{i-1}]$ for all $i\ge 1$. 
We say that $\Lg$ is nilpotent if there exists an index $c\ge 1$ such that $\Lg^c=0$. In that case, the 
smallest such index is called the nilpotency class of $\Lg$ and is denoted by $c(\Lg)$.
Let $\Lg^{(0)}=\Lg$, and $\Lg^{(i)}=[\Lg^{(i-1)},[\Lg^{(i-1)}]$ for all $i\ge 1$.
We say that $\Lg$ is solvable if there exists an index $d\ge 1$ such that $\Lg^{(d)}=0$. In that case, the 
smallest such integer is called the solvability class and is denoted by $d(\Lg)$. \\[0.2cm]
Classical $R$-matrices and double Lie algebras have been defined in \cite{SEM} as follows.

\begin{defi}\label{rm}
Let $V$ be a vector space over a field $K$, and $\Lg=(V, [\, ,])$ be a Lie bracket on $V$.
A linear transformation $R\colon \Lg\ra \Lg$ is called a {\it classical $R$-matrix}, if
\[
[x,y]_R=[R(x),y]+[x,R(y)]
\]
defines a Lie bracket, i.e., satisfies the Jacobi identity. In this case, the pair $(\Lg,R)$ is
called a {\it double Lie algebra}.
\end{defi}

It is useful to set 
\[
B_R(x,y)=[R(x),R(y)]-R([R(x),y]+[x,R(y)]).
\]
Then the Jacobi identity for $[x,y]_R$ can be formulated as follows \cite{SEM}:

\begin{prop}
Let $\Lg$ be a Lie algebra with Lie bracket $[\, ,\,]$.
The bracket $[x,y]_R=[R(x),y]+[x,R(y)]$ satisfies the Jacobi identity if and only if
\[
[B_R(x,y),w]+[B_R(y,w),x]+[B_R(w,x),y] = 0
\]
for all $x,y,w\in \Lg$.
\end{prop}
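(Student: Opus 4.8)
The plan is to compute the Jacobiator of the new bracket directly and match it against the claimed cyclic expression. Write
\[
J_R(x,y,w)=[[x,y]_R,w]_R+[[y,w]_R,x]_R+[[w,x]_R,y]_R,
\]
so that the Jacobi identity for $[\, ,\,]_R$ is precisely the statement $J_R\equiv 0$. First I would expand a single summand. Since $[x,y]_R=[R(x),y]+[x,R(y)]$, applying the definition of $[\, ,\,]_R$ once more gives
\[
[[x,y]_R,w]_R=[R([R(x),y]+[x,R(y)]),w]+[[R(x),y],R(w)]+[[x,R(y)],R(w)].
\]

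The key algebraic move is then to eliminate the inner $R$ applied to a bracket using the definition of $B_R$. Rearranging $B_R(x,y)=[R(x),R(y)]-R([R(x),y]+[x,R(y)])$ yields $R([R(x),y]+[x,R(y)])=[R(x),R(y)]-B_R(x,y)$, so that
\[
[[x,y]_R,w]_R=[[R(x),R(y)],w]-[B_R(x,y),w]+[[R(x),y],R(w)]+[[x,R(y)],R(w)].
\]
Summing this cyclically over $(x,y,w)$, the $B_R$-terms assemble into exactly $-\bigl([B_R(x,y),w]+[B_R(y,w),x]+[B_R(w,x),y]\bigr)$, the negative of the expression in the statement. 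It therefore remains to show that the sum $S$ of all the remaining double-bracket terms vanishes.

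The substance of the argument — and the only place where the original Jacobi identity of $\Lg$ enters — is the regrouping of $S$. Each of its nine cyclic contributions is a double bracket $[[\,\cdot\,,\,\cdot\,],\,\cdot\,]$ in which exactly two of the three arguments carry $R$ and one does not. Sorting these terms by which variable is $R$-free, I would collect them into three families. For instance, the three terms in which $w$ is $R$-free are $[[R(x),R(y)],w]$, $[[R(y),w],R(x)]$ and $[[w,R(x)],R(y)]$; setting $a=R(x)$, $b=R(y)$, $c=w$ these read $[[a,b],c]+[[b,c],a]+[[c,a],b]$, which is zero by the Jacobi identity for $[\, ,\,]$. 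The families with $x$ and with $y$ playing the $R$-free role are handled identically, so $S=0$ and hence $J_R=-\bigl([B_R(x,y),w]+[B_R(y,w),x]+[B_R(w,x),y]\bigr)$, from which the stated equivalence follows at once.

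I expect the only real obstacle to be bookkeeping rather than any conceptual difficulty: the expansion produces nine double-bracket terms besides the $B_R$-terms, and the heart of the proof is recognizing that they partition into three cyclic Jacobi triples. Keeping careful track of which argument is $R$-free in each term is exactly what makes these three hidden applications of the Jacobi identity visible.
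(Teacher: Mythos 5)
Your proof is correct. There is nothing in the paper to compare it against: this proposition is stated there without proof, being quoted from Semenov-Tyan-Shanskii \cite{SEM}, so your argument supplies a verification the paper omits. The bookkeeping checks out: expanding each summand of the Jacobiator via $R([x,y]_R)=[R(x),R(y)]-B_R(x,y)$ produces the cyclic $B_R$-sum with a minus sign plus nine double brackets, and these nine terms do partition, according to which of $x,y,w$ is $R$-free, into three triples each of which vanishes by the Jacobi identity of $[\, ,\,]$. In fact you establish the stronger pointwise identity $J_R(x,y,w)=-\bigl([B_R(x,y),w]+[B_R(y,w),x]+[B_R(w,x),y]\bigr)$, valid for every linear map $R$, from which the stated equivalence is immediate. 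Your method is also essentially the same computation the paper does carry out for the special case $R=D\in\Der(\Lg)$ in Proposition 2.4: expand the Jacobiator of the deformed bracket and absorb the cross terms by three applications of the Jacobi identity of the original bracket.
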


\begin{defi}
Let $\la\in K$. The identity $B_R(x,y)+\la [x,y]=0$ for all $x,y\in \Lg$ is called MYBE, the 
{\it modified Yang-Baxter equation}. 
\end{defi}

It is obvious that every solution $R$ of MYBE is a classical $R$-matrix. The converse, however, 
need not be true in general. \\[0.2cm]
Concerning question $\ref{qu1}$ we have the following result.

\begin{prop}\label{2.4}
Let $\Lg$ be a Lie algebra and $D\in \Der(\Lg)$ be a derivation. Then $[x,y]_D=D([x,y])
=[D(x),y]+[x,D(y)]$ satisfies the Jacobi identity if and only if
\begin{align}\label{1}
D\Bigl([D(x),[y,w]]+[D(y),[w,x]]+[D(w),[x,y]] \Bigr)=0
\end{align}
for all $x,y,w\in \Lg$.
\end{prop}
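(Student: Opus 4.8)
The plan is to prove Proposition \ref{2.4} by a direct computation, starting from the observation that a derivation $D$ is a very special classical $R$-matrix. Since $D$ is a derivation, we have $[D(x),y]+[x,D(y)]=D([x,y])$, so the bracket $[x,y]_D$ coincides with both the $R$-matrix expression $[R(x),y]+[x,R(y)]$ and the single term $D([x,y])$. This double description is what makes the proof short, and I would exploit it throughout. My first step is to invoke the preceding Proposition, which characterizes the Jacobi identity for $[x,y]_R$ in terms of the map $B_R$. Specifically, the Jacobi identity holds if and only if
\[
[B_D(x,y),w]+[B_D(y,w),x]+[B_D(w,x),y]=0
\]
for all $x,y,w\in\Lg$, where $B_D(x,y)=[D(x),D(y)]-D([D(x),y]+[x,D(y)])$.

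The heart of the argument is to simplify $B_D$ using the derivation property. First I would rewrite the inner expression $[D(x),y]+[x,D(y)]=D([x,y])$, so that $B_D(x,y)=[D(x),D(y)]-D^2([x,y])$. Next, applying $D$ to the derivation identity $D([x,y])=[D(x),y]+[x,D(y)]$ and using that $D$ is a derivation again gives
\[
D^2([x,y])=[D^2(x),y]+2[D(x),D(y)]+[x,D^2(y)].
\]
Substituting this into $B_D$ yields
\[
B_D(x,y)=-[D(x),D(y)]-[D^2(x),y]-[x,D^2(y)].
\]
I would then substitute this closed form into the cyclic sum above. The expectation is that the terms of the shape $[D^2(\cdot),\cdot]$ and $[\cdot,D^2(\cdot)]$, once expanded over the three cyclic permutations, reorganize into expressions that are themselves recognizable as $D$ applied to a cyclic Jacobi-type sum, with the remaining $[D(\cdot),D(\cdot)]$ terms combining analogously.

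The main obstacle, and the step requiring the most care, is the bookkeeping in this cyclic expansion: one must show that the entire double-bracket sum collapses to precisely $-D$ applied to the cyclic expression $[D(x),[y,w]]+[D(y),[w,x]]+[D(w),[x,y]]$. The key algebraic input here is the Jacobi identity for the \emph{original} bracket $[\,,\,]$, together with repeated use of the derivation rule to pull $D$ outside of brackets; for instance, terms like $[[D^2(x),w],y]$ arising from the expansion should be matched against pieces of $D\bigl([D(x),[y,w]]\bigr)$ after applying Jacobi to regroup nested brackets. Once all purely-original-Jacobi terms are discarded, what survives is exactly $-D$ of the claimed cyclic sum, so the vanishing of the cyclic sum of $[B_D(\cdot,\cdot),\cdot]$ is equivalent to identity \eqref{1}. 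I would present the computation so that the cancellation governed by the original Jacobi identity is made explicit at each regrouping, since that is where an error would most easily hide.
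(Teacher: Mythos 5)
Your proof is correct, but it takes a genuinely different route from the paper's. The paper never invokes the $B_R$ criterion: it expands the cyclic Jacobi sum for $[\,,\,]_D$ directly, writing the inner bracket as $[D(x),y]+[x,D(y)]$ and the outer one as $D([\,\cdot\,,w])$, and then applies the Jacobi identity of the original bracket three times to collapse the resulting six terms into $-D\bigl([[y,w],D(x)]+[[w,x],D(y)]+[[x,y],D(w)]\bigr)$, which equals the left-hand side of \eqref{1}, so the equivalence is immediate. You instead route the argument through the $B_R$ criterion of the preceding Proposition (which the paper only cites from \cite{SEM}), compute the closed form $B_D(x,y)=-[D(x),D(y)]-[D^2(x),y]-[x,D^2(y)]$, and claim that the cyclic sum $[B_D(x,y),w]+[B_D(y,w),x]+[B_D(w,x),y]$ collapses to $-D$ applied to the Hom-Jacobi sum. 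That claim is true, and the bookkeeping you deferred does close: by the original Jacobi identity one has $[[D^2(x),y],w]+[[w,D^2(x)],y]=[D^2(x),[y,w]]$ (and cyclically), while the terms $[[D(x),D(y)],w]$, $[[D(y),D(w)],x]$, $[[D(w),D(x)],y]$ are exactly the negatives of the cross terms such as $[D(x),[D(y),w]]+[D(y),[w,D(x)]]=-[w,[D(x),D(y)]]$ arising in the Leibniz expansion of $D$ applied to the Hom-Jacobi sum. The trade-off is this: your proof leans on the cited criterion, which is not proved in the paper, and requires heavier regrouping; but it produces the explicit obstruction $B_D$, which is of independent interest for the later discussion of the modified Yang-Baxter equation, and your computation simultaneously establishes the equivalent reformulation of \eqref{1} in terms of $[x,[D(y),D(w)]]$ and $[D^2(x),[w,y]]$ that the paper states without proof in the remark following the Proposition.
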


\begin{proof}
We have
\begin{align*}
[[x,y]_D,w]_D & = [[D(x),y]+[x,D(y)],w]_D \\
 & = D([[D(x),y],w]) + D([[x,D(y)],w])
\end{align*}
This yields
\begin{align*}
[[x,y]_D,w]_D + [[y,w]_D,x]_D+ [[w,x]_D,y]_D & = D([[D(x),y],w]) + D([[x,D(y)],w]) \\
 & +  D([[D(y),w],x]) + D([[y,D(w)],x]) \\
 & +  D([[D(w),x],y]) + D([[w,D(x)],y]) \\
 & = -D([[y,w],D(x)]+[[w,x],D(y)]+[[x,y],D(w)]).
\end{align*}
In the last step we have used the Jacobi identity three times, i.e., 
\[
[[D(x),y],w]+[[y,w],D(x)]+[[w,D(x)],y]=0,
\]
and similarly for the terms with $D(y)$ and $D(w)$. 
\end{proof}

Note that identity \eqref{1} can also be stated  
as follows: for all $x,y,w\in \Lg$ we have

\begin{align*}
0 & = [x,[D(y),D(w)]]+[y,[D(w),D(x)]]+[w,[D(x),D(y)]] \\
  & + [D^2(y),[x,w]]+[D^2(w),[y,x]]+[D^2(x),[w,y]].
\end{align*}

\begin{defi}
Let $\Lg$ be a Lie algebra and $D\in \Der(\Lg)$ be a derivation, such that $[x,y]_D=D([x,y])$ 
defines another Lie bracket $\Lg_D$. Then the pair $(\Lg,D)$ is called a {\it derivation double Lie algebra}.
\end{defi}

The identity within the brackets of \eqref{1} for a linear map $D\colon \Lg\ra \Lg$ is called the 
{\it Hom-Jacobi identity}, see \cite{MAS} for further references. It says that
\begin{align}\label{2}
[D(x),[y,z]]+[D(y),[z,x]]+[D(z),[x,y]] =0
\end{align}
for all $x,y,z\in \Lg$. 

\begin{cor}
Let $\Lg$ be a Lie algebra with Lie bracket $[\, , \,]$ and $z\in \Lg$. Then
the bracket $[x,y]_D=[z,[x,y]]$ satisfies the Jacobi identity if and only if
\begin{align}\label{3}
[z,[[z,x],[y,w]]]+[z,[[z,y],[w,x]]]+[z,[[z,w],[x,y]]] & =0
\end{align}
for all $x,y,w\in \Lg$.
\end{cor}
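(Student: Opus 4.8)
The plan is to derive this Corollary as a direct specialization of Proposition \ref{2.4}. The only structural point to notice is that for any fixed $z \in \Lg$, the map $D = \ad(z)\colon x \mapsto [z,x]$ is a derivation of $\Lg$; indeed, the Leibniz rule $[z,[x,y]] = [[z,x],y] + [x,[z,y]]$ is just the Jacobi identity for $[\,,\,]$. Hence $D = \ad(z)$ is a legitimate choice of derivation, and the bracket $[x,y]_D = [z,[x,y]]$ is precisely the bracket $D([x,y]) = [D(x),y] + [x,D(y)]$ appearing in Proposition \ref{2.4}.

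With this identification in hand, I would simply invoke Proposition \ref{2.4}: the bracket $[x,y]_D = D([x,y])$ satisfies the Jacobi identity if and only if condition \eqref{1} holds, namely
\[
D\Bigl([D(x),[y,w]]+[D(y),[w,x]]+[D(w),[x,y]]\Bigr)=0
\]
for all $x,y,w\in\Lg$. Substituting $D = \ad(z)$, so that $D(x)=[z,x]$, $D(y)=[z,y]$, $D(w)=[z,w]$, and applying the outer $D = \ad(z)$ together with the linearity of the bracket in its second slot to distribute $[z,\,\cdot\,]$ over the three-term sum, one obtains exactly
\[
[z,[[z,x],[y,w]]]+[z,[[z,y],[w,x]]]+[z,[[z,w],[x,y]]] = 0,
\]
which is identity \eqref{3}.

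There is no genuine obstacle here: the statement is an immediate corollary, and the entire content is the observation that inner derivations are derivations, so that Proposition \ref{2.4} applies verbatim. The only care needed is bookkeeping — making sure the cyclic roles of $x,y,w$ and the placement of $z$ match between \eqref{1} and \eqref{3} after distributing the outer bracket over the sum. Since $\ad(z)$ is linear, this redistribution is routine and the two expressions coincide term by term.
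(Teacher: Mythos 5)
Your proof is correct and is exactly the paper's own argument: the paper's proof consists of the single line ``This follows from Proposition $\ref{2.4}$ with $D=\ad(z)$,'' which is precisely your specialization, with the observation that $\ad(z)\in\Der(\Lg)$ and the linearity of $\ad(z)$ used to distribute the outer bracket over the three-term sum and turn identity \eqref{1} into identity \eqref{3}.
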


\begin{proof}
This follows from Proposition $\ref{2.4}$ with $D=\ad (z)$.
\end{proof}

For $w=z$ the identity \eqref{3} implies, for $2\neq 0$
\begin{align}\label{4}
[z,[[z,x],[z,y]]]=0
\end{align}
for all $x,y\in \Lg$.

\begin{lem}
Let $\Lg$ be a Lie algebra and suppose that $D=\ad (z)$ is a classical $R$-matrix, so that 
$[x,y]_D=[z,[x,y]]$ defines a second Lie bracket. Then $\ad (z)^3$ is a derivation of $\Lg$.
\end{lem}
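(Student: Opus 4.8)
The plan is to reduce the assertion that $\ad(z)^3$ is a derivation to the single identity \eqref{4}, which is already available under the hypothesis. Since $D=\ad(z)$ is assumed to be a classical $R$-matrix, Proposition~\ref{2.4} applies, and in its $D=\ad(z)$ form this is identity \eqref{3}; setting $w=z$ there yields \eqref{4}, namely $[z,[[z,x],[z,y]]]=0$ for all $x,y\in\Lg$. Written with $D=\ad(z)$ this reads $D([D(x),D(y)])=0$.

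First I would record the elementary binomial Leibniz formula for powers of a derivation: since $D$ is a derivation, for $n=3$ one has
\[
D^3([x,y]) = [D^3(x),y]+3[D^2(x),D(y)]+3[D(x),D^2(y)]+[x,D^3(y)]
\]
for all $x,y\in\Lg$. Hence $\ad(z)^3=D^3$ is a derivation precisely when the cross terms vanish, i.e. when $[D^2(x),D(y)]+[D(x),D^2(y)]=0$ for all $x,y$, where we use that the characteristic is zero so that the scalar factor $3$ may be cancelled.

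The key observation is that these cross terms are exactly what \eqref{4} controls. Applying the Leibniz rule once to $D([D(x),D(y)])$ gives
\[
D([D(x),D(y)]) = [D^2(x),D(y)]+[D(x),D^2(y)],
\]
so identity \eqref{4}, which asserts that the left-hand side vanishes, is equivalent to the vanishing of the cross terms. Substituting back into the binomial formula leaves $D^3([x,y])=[D^3(x),y]+[x,D^3(y)]$, which is the claim.

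Since every step is a direct computation, I do not expect a genuine obstacle; the only real content is identifying the degree-$3$ cross terms in the Leibniz expansion of $D^3$ with the left-hand side of \eqref{4}. The main care required is to be sure that the hypothesis supplies \eqref{4} (via \eqref{3} with $w=z$) rather than merely the weaker raw identity \eqref{1}, and that the characteristic-zero assumption is precisely what permits discarding the factor $3$.
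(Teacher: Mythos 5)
Your proposal is correct and is essentially the paper's own proof: both derive identity \eqref{4} from the $R$-matrix hypothesis (via \eqref{3} with $w=z$), rewrite it by the Leibniz rule as $[D^2(x),D(y)]+[D(x),D^2(y)]=0$, and use this to kill the cross terms in the cubic Leibniz expansion of $D^3([x,y])$. The only cosmetic difference is your remark about cancelling the factor $3$: in the direction actually needed no division is required, since the cross terms carry coefficient $3$ but their sum is already zero; the hypothesis $3\neq 0$ matters only for the converse, as the paper notes after the lemma.
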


\begin{proof}
For $D=\ad (z)$ identity \eqref{4} gives
\begin{align*}
0 & = D([D(x),D(y)]) \\
  & = [D^2(x),D(y)]+[D(x),D^2(y)].
\end{align*}
This yields
\begin{align*}
D^3([x,y]) & = [D^3(x),y]+3[D^2(x),D(y)]+3[D(x),D^2(y)]+[x,D^3(y)] \\
  & = [D^3(x),y]+[x,D^3(y)].
\end{align*}
Hence $D^3=\ad (z)^3$ is a derivation.
\end{proof}

Conversely, if $\ad (z)^3$ is a derivation of $\Lg$, and $3\neq 0$, then identity \eqref{4} 
holds for $z$.

\begin{rem}
An element $z$ of a Lie algebra $\Lg$ is called {\it extremal}, if there is
a linear map $f_z\colon \Lg\ra K$ such that
\[
[z,[z,x]]=f_z(x)z
\]
for all $x\in \Lg$. For the study of extremal elements see \cite{COH} and the references therein.
It is a well known result of Premet, that every simple Lie algebra over an algebraically closed field 
of characteristic different from $2$ and $3$ has a nontrivial extremal element.
Note that for every extremal element $z\in \Lg$ we have $\ad (z)^3=0$, so that identity
\eqref{4} holds for all extremal elements $z\in \Lg$ by the above Lemma.
\end{rem}

\section{Simple derivation double Lie algebras}

We will give here an answer to question $\ref{qu1}$ for simple Lie algebras $\Lg$ over an
algebraically closed field of characteristic zero. In terms of classical $R$-matrices, the question is, 
for which $z\in \Lg$ the linear map $R=\ad (z)$ is a classical $R$-matrix. We will show that
for rank one every $\ad(z)$ is a classical $R$-matrix, and that for rank $r\ge 2$ only
the zero transformation is a classical $R$-matrix. In other words, there are no nontrivial
simple derivation double Lie algebras of rank $r\ge 2$. \\
One should remark that simple Lie algebras always admit nontrivial classical $R$-matrices, but not of the 
form $\ad(z)$. An easy example is given by $R=\la I_n$ with the identity matrix $I_n$. 
In general there are much more possibilities, see \cite{BED}. \\
Denote by $\Lr_{3,1}(\C)$ the $3$-dimensional solvable Lie algebra
given by $[e_1,e_2]=e_2$ and $[e_1,e_3]=e_3$, see table $1$.

\begin{prop}
Let $\Lg=\Ls\Ll(2,\C)$. Then $R=\ad(z)$ is a classical $R$-matrix for all $z\in \Lg$.
The Lie algebra $\Lg_R$ is isomorphic to $\Lr_{3,1}(\C)$ for all $z\neq 0$.
\end{prop}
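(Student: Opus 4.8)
The plan is to reduce everything to the well-known isomorphism $\Ls\Ll(2,\C)\cong(\C^3,\times)$, where the Lie bracket is the vector product $[x,y]=x\times y$ and there is a nondegenerate symmetric bilinear form $\langle\,,\rangle$ (a suitable multiple of the Killing form) satisfying the double-product identity
\[
x\times(y\times w)=\langle x,w\rangle\,y-\langle x,y\rangle\,w .
\]
First I would record that, since $\ad(z)$ is a derivation, the new bracket coincides with $[x,y]_R=[\ad(z)x,y]+[x,\ad(z)y]=\ad(z)[x,y]=z\times(x\times y)$. Applying the double-product identity with $a=z,\ b=x,\ c=y$ collapses this to
\[
[x,y]_R=\langle z,y\rangle\,x-\langle z,x\rangle\,y=\varphi(y)\,x-\varphi(x)\,y,
\]
where $\varphi:=\langle z,\cdot\rangle\in\Lg^{*}$. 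Thus the whole problem is transported to the family of brackets of the form $\varphi(y)x-\varphi(x)y$ attached to a linear functional $\varphi$.

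Second, I would verify the Jacobi identity for this $\varphi$-bracket directly, which is short: expanding gives $[[x,y]_R,w]_R=\varphi(w)\varphi(y)\,x-\varphi(w)\varphi(x)\,y$, and the three cyclic permutations cancel in pairs. Hence $[x,y]_R$ is a Lie bracket for every $z\in\Lg$, i.e.\ $R=\ad(z)$ is a classical $R$-matrix for all $z$, which proves the first assertion. Conceptually one may instead check that $R=\ad(z)$ satisfies the MYBE, since $B_R(x,y)=\langle z,z\rangle\,[x,y]$ by applying the double-product identity twice to $z\times(z\times(x\times y))$; by the remark after the MYBE this re-derives the $R$-matrix property and displays the relevant scalar $\langle z,z\rangle$.

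Finally, to identify $\Lg_R$ for $z\neq0$, I would use that $\langle\,,\rangle$ is nondegenerate, so $\varphi\neq0$ and $\ker\varphi$ is two-dimensional. Choosing $u$ with $\varphi(u)=1$ and a basis $w_1,w_2$ of $\ker\varphi$, the $\varphi$-bracket gives $[u,w_i]_R=-w_i$ and $[w_1,w_2]_R=0$; setting $e_1=-u,\ e_2=w_1,\ e_3=w_2$ yields exactly $[e_1,e_2]_R=e_2$, $[e_1,e_3]_R=e_3$, $[e_2,e_3]_R=0$, which is $\Lr_{3,1}(\C)$. Since this construction applies verbatim to every $z\neq0$ and produces the same structure constants, the isomorphism type is independent of $z$. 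The only real obstacle is bookkeeping: one must fix the cross-product model (equivalently, normalize the invariant form so that the double-product identity holds) so that the evaluation of $z\times(x\times y)$ reduces to a single application of that identity; without this model one is forced into a longer but routine verification of identity \eqref{3} on the standard basis $\{e,h,f\}$ of $\Ls\Ll(2,\C)$.
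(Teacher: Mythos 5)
Your proof is correct, and it takes a genuinely different route from the paper's. The paper proves the proposition by brute force in the standard basis: writing $z=z_1e_1+z_2e_2+z_3e_3$, it computes all brackets $[e_i,e_j]_R=[z,[e_i,e_j]]$, verifies directly that the cyclic Jacobi sum vanishes, and then states that the isomorphism $\Lg_R\simeq\Lr_{3,1}(\C)$ for $z\neq 0$ is easy to see. You instead transport the problem to the cross-product model $\Ls\Ll(2,\C)\simeq\mathfrak{so}(3,\C)\simeq(\C^3,\times)$, where the triple-product identity collapses $[x,y]_R=z\times(x\times y)$ to $\varphi(y)x-\varphi(x)y$ with $\varphi=\langle z,\cdot\rangle$. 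From there the Jacobi identity is a two-line computation valid for an arbitrary functional on an arbitrary vector space, and the identification with $\Lr_{3,1}(\C)$ is explicit: nondegeneracy of the form gives $\varphi\neq 0$ whenever $z\neq 0$ (note that you correctly use only $\varphi\neq 0$ and never $\langle z,z\rangle\neq 0$, so isotropic $z$, i.e.\ the nilpotent elements, are covered), and your basis $(-u,w_1,w_2)$ reproduces the structure constants of $\Lr_{3,1}(\C)$ on the nose. What your approach buys is a conceptual, basis-free argument that makes the $z$-independence of the isomorphism type transparent and yields the MYBE scalar as a byproduct; what the paper's approach buys is self-containedness, since it needs no identification with $(\C^3,\times)$ and no triple-product identity, only the structure constants of $\Ls\Ll(2,\C)$. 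One minor remark on your aside: with the paper's convention $B_R(x,y)+\la [x,y]=0$, your computation $B_R(x,y)=\langle z,z\rangle\,[x,y]$ gives $\la=-\langle z,z\rangle$; the sign is immaterial for concluding that $R$ satisfies MYBE and hence is a classical $R$-matrix.
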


\begin{proof}
The claim follows by a direct computation. Let $(e_1,e_2,e_3)$ be the standard basis of $\Ls\Ll(2,\C)$
with $[e_1,e_2]=e_3$,  $[e_1,e_3]=-2e_1$,  $[e_2,e_3]=2e_2$, and write
$z=z_1e_1+z_2e_2+z_3e_3$. Using $[x,y]_R=[z,[x,y]]$ we have
\begin{align*}
[e_1,e_2]_R & = [z,[e_1,e_2]]=[z,e_3]=-2z_1e_1+2z_2e_2, \\
[e_1,e_3]_R & = [z,[e_1,e_3]]=[z,-2e_1]=-2z_3e_1+2z_2e_3, \\
[e_2,e_3]_R & = [z,[e_2,e_3]]=[z,2e_2]=-2z_3e_2+2z_1e_3, \\
\end{align*} 
and 
\begin{align*}
[e_1,[e_2,e_3]_R]_R & = [e_1,-2z_3e_2+2z_1e_3]_R=-4z_2z_3e_2+4z_1z_2e_3, \\
[e_2,[e_3,e_1]_R]_R & = [e_2,2z_3e_1-2z_2e_3]_R=4z_1z_3e_1-4z_1z_2e_3, \\
[e_3,[e_1,e_2]_R]_R & = [e_3,-2z_1e_1+2z_2e_2]_R= -4z_1z_3e_1+4z_2z_3e_2. \\
\end{align*}

Hence
\begin{align*}
[e_1,[e_2,e_3]_R]_R+[e_2,[e_3,e_1]_R]_R+[e_3,[e_1,e_2]_R]_R  & = 0.
\end{align*}

It is easy to see that the resulting Lie algebra $\Lg_R$ is isomorphic to
$\Lr_{3,1}(\C)$, except for $z=0$.

\end{proof}

\begin{thm}\label{3.3}
Let $\Lg$ be a simple Lie algebra of rank $r\ge 2$ over an algebraically closed field $K$ of characteristic 
zero, and $z\in \Lg$. Suppose that $R=\ad(z)$ is a classical $R$-matrix. Then $z=0$ and $R=0$.
\end{thm}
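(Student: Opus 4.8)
The plan is to exploit identity \eqref{1} of Proposition~\ref{2.4}, which for $D=\ad(z)$ asserts that
\[
D\bigl([D(x),[y,w]]+[D(y),[w,x]]+[D(w),[x,y]]\bigr)=0
\]
for all $x,y,w\in\Lg$, and to evaluate it on a few well-chosen elements. The set of $z$ satisfying this identity is Zariski closed and invariant under the adjoint group, since the left-hand side is polynomial in $z$ and its vanishing for all $x,y,w$ is a finite system of polynomial equations. Writing the Jordan decomposition $z=s+n$, the semisimple part $s$ lies in the closure of the adjoint orbit of $z$ and is hence again a solution; and if $z$ is a nonzero nilpotent, the minimal nilpotent orbit lies in the closure of its orbit, so a highest root vector $e_\theta$ is a solution. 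It therefore suffices to show (A) that no nonzero semisimple $s$ is a solution, and (B) that $e_\theta$ is not a solution; these force $s=0$ and then $n=0$, i.e.\ $z=0$.

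For (A) I would fix a Cartan subalgebra $\Lh\ni s$, so that $\ad(s)$ acts on each root space $\Lg_\alpha$ as the scalar $\alpha(s)$, and substitute $x=e_\alpha$, $y=e_\beta$, $w=h\in\Lh$. Since $[s,h]=0$ the inner bracket collapses and the Hom-Jacobiator equals $-\bigl(\alpha(s)\beta(h)+\beta(s)\alpha(h)\bigr)[e_\alpha,e_\beta]$, so the identity reads $\bigl(\alpha(s)\beta(h)+\beta(s)\alpha(h)\bigr)(\alpha+\beta)(s)\,[e_\alpha,e_\beta]=0$. Whenever $\alpha+\beta$ is a root, $[e_\alpha,e_\beta]\neq0$ and $\alpha,\beta$ are linearly independent; letting $h$ vary then forces $(\alpha+\beta)(s)=0$. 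Thus every root which is a sum of two roots vanishes on $s$. The structural input is that for rank $r\ge2$ every root is a sum of two roots: a positive root of height $\ge2$ peels off a simple root, while a simple root $\alpha_i$ equals $(\alpha_i+\alpha_j)+(-\alpha_j)$ for a neighbour $\alpha_j$ in the connected Dynkin diagram. Since the roots span $\Lh^*$, we obtain $s=0$.

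Step (B) is where the rank hypothesis really bites and will be the main obstacle. Take an $\Ls\Ll_2$-triple $(e_\theta,h_\theta,e_{-\theta})$; the $\ad(h_\theta)$-eigenspaces give a $5$-grading $\Lg=\bigoplus_{k=-2}^{2}\Lg_k$ with $\Lg_{\pm2}$ one-dimensional, and the special feature of $\Ls\Ll_2$ is exactly that there $\Lg_{\pm1}=0$. For $r\ge2$, writing $\theta=\sum_ic_i\alpha_i$ and using $\sum_ic_i\langle\alpha_i,\theta^{\vee}\rangle=2$ together with $(\theta,\theta)\ge(\alpha_i,\alpha_i)$, one shows there is a simple root $\mu$ with $\langle\mu,\theta^{\vee}\rangle=1$; then $\theta-\mu$ is a root, the $\theta$-string through $\mu$ having length one since $\theta$ is highest. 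I would test the identity on $x=e_{-\theta}$, $y=e_{-\mu}$, $w=e_\mu$. Here $[e_\theta,e_\mu]=0$ (as $\theta$ is highest) kills one term, while the $[h_\theta,\,\cdot\,]$-term lands in $[\Lh,\Lh]=0$, so the only surviving term of the Hom-Jacobiator is $[\,[e_\theta,e_{-\mu}],[e_\mu,e_{-\theta}]\,]$, a nonzero multiple of the coroot element $t_{\theta-\mu}\in\Lh$. Applying $D=\ad(e_\theta)$ yields a nonzero multiple of $\theta(t_{\theta-\mu})\,e_\theta$, and $\theta(t_{\theta-\mu})=(\theta,\theta-\mu)=\tfrac12(\theta,\theta)\neq0$. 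Hence the identity fails on $e_\theta$, proving (B).

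The heart of the argument is the computation in (B): pinning down the $5$-grading, producing the root $\mu$, and choosing test elements that annihilate all but one term of the Hom-Jacobiator. The two orbit-closure reductions (semisimple part in the orbit closure; minimal nilpotent orbit in the closure of any nonzero nilpotent orbit) are standard, and it is they that let me avoid analysing general nilpotent and mixed elements directly.
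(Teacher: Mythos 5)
Your proof is correct, and for most of its length it runs parallel to the paper's: the same Zariski-closed, $\Aut(\Lg)^{\circ}$-invariant solution set, the same passage to the semisimple part $s$ via orbit closure, essentially the same root-space evaluation (the paper takes $(x,y,w)=(h,e_{\al},e_{\be})$; your sign $\al(s)\be(h)+\be(s)\al(h)$ is the correct one, the paper's minus sign is immaterial, and either version forces $\al(s)=\be(s)=0$), and the same reduction of the nilpotent case to the highest root vector $e_{\theta}$ via the minimal orbit. The genuine difference is the last step. The paper disposes of $e_{\theta}$ by remarking that one may assume $\Lg$ is of type $A_2$, $B_2$ or $G_2$ and then appealing to a direct computation with a computer algebra system in those three cases. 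You instead give a uniform, computer-free argument: pick a simple root $\mu$ with $\langle\mu,\theta^{\vee}\rangle=1$ (your existence argument is valid, since for $r\ge 2$ one has $\langle\al_i,\theta^{\vee}\rangle\in\{0,1\}$ for every simple root $\al_i$, and these integers cannot all vanish), then evaluate \eqref{3} at $(x,y,w)=(e_{-\theta},e_{-\mu},e_{\mu})$: two of the three terms die because $[e_{\theta},e_{\mu}]=0$ and $[\Lh,\Lh]=0$, the survivor $[[e_{\theta},e_{-\mu}],[e_{\mu},e_{-\theta}]]$ is a nonzero multiple of $t_{\theta-\mu}$, and $\ad(e_{\theta})$ maps it to a nonzero multiple of $(\theta,\theta-\mu)\,e_{\theta}=\frac{1}{2}(\theta,\theta)\,e_{\theta}\neq 0$. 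I verified this computation; it is sound. What your route buys is a self-contained proof in which the role of the hypothesis $r\ge 2$ is transparent: it is exactly what guarantees the root $\mu$, i.e., the nonvanishing of the $\pm 1$-pieces of the $5$-grading attached to $e_{\theta}$, which is precisely what fails for $\Ls\Ll_2$. What the paper's route buys is brevity, at the price of a case split and a machine verification. One further economy on your side: the paper's intermediate observation that Jacobson--Morozov plus identity \eqref{4} gives $\ad(z)^3=0$ for nilpotent solutions is never needed in your argument, and you correctly omit it.
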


\begin{proof}
Let $G$ be the identity component of the algebraic group $\Aut(\Lg)$, i.e., $G=\Aut(\Lg)^{\circ}$.
Then $G$ acts on $\Lg$ and the set of $z\in \Lg$ satisfying the identity \eqref{3} is 
a $G$-invariant closed set. Denote by $z=s+n$ the Jordan-Chevalley decomposition of $z$, with
semisimple part $s$ and nilpotent part $n$. We have $\ad (z)=\ad(z)_s+\ad(z)_n=\ad(s)+\ad(n)$, 
since $\Lg$ is simple. Since the identity \eqref{3} holds also for the 
orbit closure and $Gs\subseteq \overline{Gz}$ is closed, we may apply a standard limit argument
and pass to the semisimple part $s$ of $z$. But for semisimple elements $s$ we will show that the identity forces $s=0$.
Hence we obtain that $z=n$ must be nilpotent. \\
Let $z=s\neq 0$ be semisimple. Let $\al,\be $ be roots such that $\al+\be$ is again a root. We may assume that
$(\al+\be)(z)\neq 0$, since $\Lg$ has rank at least $2$. Now we take $(x,y,w)=(h,e_{\al},e_{\be})$ for identity
\eqref{1}, with $h\in \Lh$, the Cartan subalgebra of $\Lg$.
We have $[e_{\al},e_{\be}]=n_{\al\be}e_{\al+\be}$ with $n_{\al\be}\neq 0$, since $\al+\be$ is a root.
Furthermore $[h,e_{\al}]=\al(h)e_{\al}$ and $[h,e_{\be}]=\be(h)e_{\be}$. We have
\begin{align*}
[[z,x],[y,w]] + [[z,y],[w,x]] + [[z,w],[x,y]] & = [[z,h],[e_{\al},e_{\be}]]+[[z,e_{\al}],[e_{\be},h]]+[[z,e_{\be}],[h,e_{\al}]] \\
& = -(\al(z)\be(h)-\be(z)\al(h))[e_{\al},e_{\be}] \\
& = -n_{\al\be}(\al(z)\be(h)-\be(z)\al(h))e_{\al+\be}.
\end{align*}
Applying $\ad(z)$ on the left-hand side we obtain by \eqref{3},
\begin{align*}
0 & = n_{\al\be}(\al+\be)(z)(\al(z)\be(h)-\be(z)\al(h))
\end{align*}
for all $h\in \Lh$. This means  $(\al(z)\be(h)-\be(z)\al(h))=0$ for all $h\in \Lh$. This implies
$\al(z)=\be(z)=0$, so that $(\al+\be)(z)=0$, a contradiction. \\
So we may assume that $z$ is nilpotent. By Morozov's theorem $[z,x]=z$ for some $x\in \Lg$, so that
identity \eqref{4} implies $\ad (z)^3=0$. Again by the limit argument we can assume that $z$ lies in the
minimal nilpotent orbit, i.e., $z=e_{\theta}$, where $\theta$ is the maximal root, or just a long root. 
This implies that we may already assume that $\Lg$ is of type
$A_2$, $B_2$ or $G_2$. But now a direct computation with a computer algebra system shows that in all
three cases the identity forces $z=0$ and we are done.    
\end{proof}

\begin{cor}
Let $\Lg$ be a simple Lie algebra of rank $r\ge 2$ over an algebraically closed field $K$ of characteristic
zero, and suppose that $R=\ad (z)$ satisfies the modified Yang-Baxter equation, that is 
\begin{align}\label{mybe}
[z,[z,[x,y]]]=[[z,x],[z,y]]+\la [x,y]
\end{align}
for all $x,y\in \Lg$. Then $z=0$ and $\la=0$.
\end{cor}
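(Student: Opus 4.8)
The plan is to recognize the modified Yang--Baxter equation \eqref{mybe} as precisely the condition $B_R(x,y)+\la[x,y]=0$ for the endomorphism $R=\ad(z)$, and then to invoke Theorem \ref{3.3} directly. The corollary is a clean specialization, so the entire mathematical content already resides in the theorem; the work is just unwinding the definitions.

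First I would verify the translation. For $R=\ad(z)$ the defining expression $B_R(x,y)=[R(x),R(y)]-R([R(x),y]+[x,R(y)])$ becomes $[[z,x],[z,y]]-[z,[z,[x,y]]]$, where I use that $\ad(z)$ is a derivation, so that $[R(x),y]+[x,R(y)]=[[z,x],y]+[x,[z,y]]=[z,[x,y]]$ and hence $R([R(x),y]+[x,R(y)])=[z,[z,[x,y]]]$. Thus the hypothesis \eqref{mybe} is exactly $B_R(x,y)+\la[x,y]=0$; in other words, $R=\ad(z)$ is a solution of MYBE.

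As noted in the preliminaries, every solution of MYBE is automatically a classical $R$-matrix (because if $B_R(x,y)=-\la[x,y]$, then $[B_R(x,y),w]+[B_R(y,w),x]+[B_R(w,x),y]=-\la\bigl([[x,y],w]+[[y,w],x]+[[w,x],y]\bigr)=0$ by the Jacobi identity). Therefore $R=\ad(z)$ is a classical $R$-matrix, and since $\Lg$ is simple of rank $r\ge 2$ over an algebraically closed field of characteristic zero, Theorem \ref{3.3} applies and forces $z=0$, equivalently $R=0$.

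It then remains to extract $\la=0$. Substituting $z=0$ into \eqref{mybe} leaves $\la[x,y]=0$ for all $x,y\in\Lg$. Because $\Lg$ is simple, it is non-abelian, so $[\Lg,\Lg]=\Lg\neq 0$ and there exist $x,y$ with $[x,y]\neq 0$; hence $\la=0$. I do not expect any genuine obstacle here: the only point requiring minor care is this last deduction, which relies on simplicity of $\Lg$ to guarantee a nonvanishing bracket.
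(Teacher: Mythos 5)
Your proof is correct and follows exactly the route the paper intends (the corollary is stated without proof, as an immediate consequence of Theorem \ref{3.3}): you verify that \eqref{mybe} is MYBE for $R=\ad(z)$, use the paper's observation that any MYBE solution is a classical $R$-matrix, apply Theorem \ref{3.3} to get $z=0$, and then extract $\la=0$ from simplicity. Nothing is missing; the explicit check that $B_{R}(x,y)=[[z,x],[z,y]]-[z,[z,[x,y]]]$ and the final nonvanishing-bracket argument are exactly the details the paper leaves to the reader.
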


Note that the operator form of identity \eqref{mybe} is given by
\[
\ad(z)^2\ad(x)-\ad(z)\ad(x)\ad(z)+\ad(x)\ad(z)^2=\la \ad (x)
\]
for all $x$. For the rank one case the following result can be shown, for all fields
$K$ of characteristic zero, again by a direct computation.

\begin{prop}
Let $\Lg=\Ls\Ll(2,K)$ with standard basis $(e_1,e_2,e_3)$, and $z=z_1e_1+z_2e_2+z_3e_3$. 
Then $R=ad(z)$ solves MYBE for $z\in \Lg$ and $\la\in K$ if and only if $\la=4(z_1z_2+z_3^2)$.
\end{prop}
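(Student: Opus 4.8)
The plan is to verify the claimed equivalence by a direct computation in $\Ls\Ll(2,K)$, exploiting the fact that the modified Yang--Baxter equation \eqref{mybe} is quadratic in the coordinates $z_1,z_2,z_3$ and that $K$ has characteristic zero. First I would fix the standard basis $(e_1,e_2,e_3)$ with the structure constants used in Proposition~\ref{3.3}, namely $[e_1,e_2]=e_3$, $[e_1,e_3]=-2e_1$, $[e_2,e_3]=2e_2$, and write out the operator $\ad(z)$ explicitly as a $3\times 3$ matrix in this basis. Since \eqref{mybe} must hold for all $x,y\in\Lg$, and both sides are bilinear and skew-symmetric in $(x,y)$, it suffices to check it on the three basis pairs $(e_1,e_2)$, $(e_1,e_3)$, $(e_2,e_3)$. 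So the computation reduces to evaluating $\ad(z)^2[e_i,e_j]$, $[\ad(z)e_i,\ad(z)e_j]$, and $[e_i,e_j]$ for these three pairs and comparing coefficients.

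Concretely, I would compute the three vectors $[z,e_i]=\ad(z)(e_i)$ once, then reuse them. For instance $[z,e_3]=-2z_1e_1+2z_2e_2$, and similarly for the other generators, exactly as in the displayed brackets of Proposition~\ref{3.3}. Substituting into \eqref{mybe} for the pair $(e_1,e_2)$, where $[e_1,e_2]=e_3$, I would obtain a vector identity in $\Lg$; matching the coefficients of $e_1,e_2,e_3$ yields scalar equations in the $z_i$ and $\la$. Doing the same for $(e_1,e_3)$ and $(e_2,e_3)$ produces the remaining equations. The expectation is that after collecting terms the only surviving condition is a single scalar relation, and that this relation is precisely $\la=4(z_1z_2+z_3^2)$; the coefficient $4(z_1z_2+z_3^2)$ is, up to scaling, the value of the Killing form (or the determinant invariant) on $z$, which is the natural $\Aut(\Lg)$-invariant quadratic form available here. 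I would present the reduction and one representative basis-pair computation, then assert that the other two pairs give consistent equations, stating that the verification is routine.

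The only genuine point requiring care, rather than routine, is the \emph{necessity} direction: one must confirm that the three basis-pair computations do not impose any condition beyond $\la=4(z_1z_2+z_3^2)$, i.e.\ that the equations from the three pairs are mutually consistent and collapse to a single constraint rather than forcing $z=0$. This is guaranteed a priori by invariance: the left-hand side minus $\la[x,y]$ of \eqref{mybe} defines an $\ad$-equivariant expression, so the condition on $(z,\la)$ is invariant under the automorphism group, and the set of admissible $\la$ for fixed $z$ must therefore depend only on an invariant of $z$, which in rank one is the quadratic Casimir $z_1z_2+z_3^2$. Thus I would organize the argument so that, once the coefficient equations are written down, their reduction to the single relation is transparent, and I would remark that the sufficiency direction is immediate by reading the same equations in reverse. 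Since every step is an explicit degree-two polynomial identity, no structural obstacle arises; the entire content is bookkeeping, and the result is stated (as in the paper) as following ``by a direct computation.''
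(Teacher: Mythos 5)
Your proposal is correct and follows essentially the same route as the paper, which proves this proposition purely ``by a direct computation'' in the standard basis: reducing \eqref{mybe} by bilinearity and skew-symmetry to the three basis pairs and comparing coefficients does produce exactly the single relation $\la=4(z_1z_2+z_3^2)$ (e.g.\ the pair $(e_1,e_2)$ gives $4z_1z_2e_3 = (\la-4z_3^2)e_3$ after the $e_1,e_2$ components cancel, and the other two pairs are consistent with it). Your added invariance remark explaining \emph{why} the constraint must be a function of the Casimir $z_1z_2+z_3^2$ is a nice touch beyond what the paper records, though the computation itself is what settles necessity.
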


Concerning identity \eqref{2} we obtain a result analogous to Theorem $\ref{3.3}$, but for
all simple Lie algebras.

\begin{prop}
Let $\Lg$ be a simple Lie algebra over an algebraically closed field $K$ of characteristic
zero, and $D$ be a derivation of $\Lg$ satisfying the identity
\eqref{2}. Then $D=0$.
\end{prop}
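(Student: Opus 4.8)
The plan is to reduce everything to a statement about a single linear subspace of $\Lg$ and then invoke simplicity. Since $\Lg$ is simple and $K$ has characteristic zero, Whitehead's first lemma gives $H^1(\Lg,\Lg)=0$, so every derivation is inner and we may write $D=\ad(z)$ for a unique $z\in\Lg$. As the centre of $\Lg$ is trivial, the map $z\mapsto\ad(z)$ is injective, so proving $D=0$ is the same as proving $z=0$. Substituting $D=\ad(z)$ into \eqref{2} turns the Hom-Jacobi identity into the requirement that
\[
H(z;x,y,w):=[[z,x],[y,w]]+[[z,y],[w,x]]+[[z,w],[x,y]]=0
\]
for all $x,y,w\in\Lg$.

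The decisive observation, and what makes this sharper and easier than Theorem $\ref{3.3}$, is that in contrast with the cubic identity \eqref{3}, the expression $H(z;x,y,w)$ is \emph{linear} in $z$. Hence the solution set
\[
V=\{\,z\in\Lg : H(z;x,y,w)=0 \text{ for all } x,y,w\in\Lg\,\}
\]
is a linear subspace of $\Lg$, being the intersection of the kernels of the linear maps $z\mapsto H(z;x,y,w)$. Moreover $V$ is invariant under $G=\Aut(\Lg)^{\circ}$: if $g$ is an automorphism, then $H(g(z);g(x),g(y),g(w))=g\bigl(H(z;x,y,w)\bigr)$, so $z\in V$ forces $g(z)\in V$. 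Since $\Lg$ is simple, ideals coincide with $G$-submodules, so $\Lg$ is an irreducible $G$-module under the adjoint action; therefore $V$ is either $0$ or all of $\Lg$.

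It then remains only to rule out $V=\Lg$ by exhibiting one element violating \eqref{2}. For rank $r\ge 2$ I would fix a Cartan subalgebra $\Lh$ and take $z\in\Lh$ regular semisimple, so $\al(z)\neq 0$ for every root $\al$. Choosing roots $\al,\be$ with $\al+\be$ again a root and evaluating $H$ at $(x,y,w)=(h,e_{\al},e_{\be})$ reproduces the root-space computation of Theorem $\ref{3.3}$ and yields a nonzero multiple of $e_{\al+\be}$ for a suitable $h\in\Lh$, so \eqref{2} fails. In the remaining rank-one case $\Lg=\Ls\Ll(2,K)$, where no such pair of roots exists, a direct evaluation on a standard basis $(e,f,h)$ settles it: one checks that $H(z;e,f,h)$ equals a nonzero scalar multiple of $z$ (namely $8z$), so \eqref{2} already fails for every $z\neq 0$. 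In either case $V\neq\Lg$, whence $V=0$, giving $z=0$ and thus $D=0$.

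The only genuine obstacle here is a bookkeeping one: ensuring that a counterexample to \eqref{2} is available in \emph{every} rank. The uniform root-pair argument requires two linearly independent roots whose sum is a root, which is exactly the condition $r\ge 2$, so the rank-one case must be treated by the separate explicit computation above. It is worth noting that this is precisely the case in which \eqref{1} admitted nonzero solutions, so the stronger conclusion for \eqref{2} genuinely rests on the linearity of $H$ in $z$.
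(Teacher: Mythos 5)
Your proof is correct, but it takes a genuinely different route from the paper's. The paper's own argument writes $D=\ad(z)$, sets $w=z$ in \eqref{2} to obtain $[[z,x],[z,y]]=0$ for all $x,y\in\Lg$, reduces to nilpotent $z$ by the Jordan-decomposition/orbit-closure argument of Theorem \ref{3.3}, invokes Jacobson--Morozov to produce $x$ with $[z,x]=z$, concludes $\ad(z)^2=0$ (i.e., $z$ is a sandwich element), and finishes with the fact that simple Lie algebras have no nontrivial sandwich elements; this is uniform in the rank and requires no computation. You instead exploit the linearity of the Hom-Jacobi expression $H(z;x,y,w)$ in $z$: the solution set $V$ is a linear subspace, its $\Aut(\Lg)^{\circ}$-invariance makes it an ideal (in characteristic zero a $G$-stable subspace is ${\rm Lie}(G)$-stable, and ${\rm Lie}(\Aut(\Lg))=\Der(\Lg)=\ad(\Lg)$), hence $V=0$ or $V=\Lg$ by simplicity, and a single witness rules out $V=\Lg$: a regular semisimple element in rank $r\ge 2$, and an explicit computation in $\Ls\Ll_2(K)$ (your value $H(z;e,f,h)=8z$ checks out). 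In fact you could bypass algebraic groups entirely: applying $\ad(u)$ to $H(z;x,y,w)=0$ and using that $\ad(u)$ is a derivation of the bracket gives $H([u,z];x,y,w)=0$ directly, so $V$ is an ideal by an infinitesimal computation. What your approach buys is the replacement of Jacobson--Morozov and the sandwich-element theorem by irreducibility of the adjoint module plus one counterexample; what it costs is the rank case split and the rank-one computation, both of which the paper's direct proof avoids (the paper's \emph{first} route, via Theorem \ref{3.3}, has the same case split as yours).
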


\begin{proof}
The result follows from Theorem $\ref{3.3}$ if $\Lg$ has rank $r\ge 2$, and for the rank $1$ case
by a direct computation. On the other hand, there is also a direct proof without a computation.
Since every derivation of $\Lg$ is inner, there is an element $z\in \Lg$ with $D=\ad (z)$.
Then identity \eqref{2} gives, with $w=z$, 
\[
[[z,x],[z,y]]=0
\]
for all $x,y\in \Lg$. As before in Theorem $\ref{3.3}$ we may assume that $z$ is nilpotent.
For $z=0$ we are done. Otherwise, since $z$ is nilpotent, there
exists an element $x\in \Lg$ with $[z,x]=z$ by Jacobson-Morozov.
This implies $[z,[z,y]]=0$ for all $y\in \Lg$, so that $z$ is a sandwich element, i.e., with $\ad(z)^2=0$. 
It is well-known that there are no nontrivial sandwich elements in simple Lie algebras. Hence $z=0$ and $D=0$.
\end{proof}

We note that for nilpotent Lie algebras we have a quite different behavior. There we always
have a nontrivial solution for \eqref{1} and \eqref{2}, different from the trivial derivation $D=0$.

\begin{prop}
Let $\Lg$ be nilpotent. Then there exists a nontrivial derivation $D\in \Der(\Lg)$ 
satisfying identities \eqref{1} and \eqref{2}.
\end{prop}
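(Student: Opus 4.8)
The plan is to produce a derivation $D$ whose image lies in the center $Z(\Lg)$. Such a $D$ satisfies the Hom-Jacobi identity \eqref{2} automatically, because each summand $[D(x),[y,z]]$ is then the bracket of a central element with something and hence vanishes; identity \eqref{1} then follows at once, since \eqref{1} is merely $D$ applied to the (now identically zero) Hom-Jacobi expression. So the whole task reduces to producing a nonzero derivation with central image.

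Next I would pin down exactly which linear maps with central image are derivations. Writing out the Leibniz rule $D([x,y])=[D(x),y]+[x,D(y)]$, the right-hand side vanishes whenever $D(x),D(y)\in Z(\Lg)$. Hence a linear map $D$ with $D(\Lg)\subseteq Z(\Lg)$ is a derivation precisely when it kills the derived subalgebra $\Lg^1=[\Lg,\Lg]$. Equivalently, such a $D$ is nothing but a composite $\Lg\twoheadrightarrow\Lg/\Lg^1\xrightarrow{\bar D}Z(\Lg)\hookrightarrow\Lg$ for some linear map $\bar D\colon\Lg/\Lg^1\to Z(\Lg)$, and $D$ is nonzero exactly when $\bar D$ is.

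Finally I would invoke nilpotency to guarantee a nonzero choice of $\bar D$. For a nonzero nilpotent $\Lg$ the derived subalgebra is proper, $\Lg^1\subsetneq\Lg$ (otherwise $\Lg^i=\Lg$ for all $i$, contradicting $\Lg^c=0$), so $\Lg/\Lg^1\neq 0$; and the last nonzero term $\Lg^{c-1}$ of the lower central series satisfies $[\Lg,\Lg^{c-1}]=\Lg^c=0$, so $Z(\Lg)\supseteq\Lg^{c-1}\neq 0$. Thus both the source and target of $\bar D$ are nonzero, and any nonzero linear map $\bar D\colon\Lg/\Lg^1\to Z(\Lg)$ lifts to a nontrivial derivation $D$ with central image, which completes the proof. (In the abelian case $\Lg^1=0$ and $Z(\Lg)=\Lg$, so every linear map is a derivation and all brackets vanish; one may simply take $D=\id$.)

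There is no genuine obstacle in this argument; the single point requiring care is the nonvacuousness of the construction, i.e.\ verifying that a proper derived subalgebra and a nontrivial center both occur for every nonzero nilpotent Lie algebra. These are exactly the two structural facts that make $\bar D$ choosable as a nonzero map, and they are what distinguish the nilpotent case from the simple case treated in Theorem~\ref{3.3}.
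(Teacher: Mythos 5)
Your proof is correct, and it takes a genuinely different route from the paper's. The paper argues by cases on the nilpotency class: for $c(\Lg)\le 2$ it observes that every derivation satisfies \eqref{2} outright (each term lies in $[\Lg,\Lg^1]=\Lg^2=0$), while for $c(\Lg)\ge 3$ it chooses a non-central element $w$ in the second-to-last nonzero term $\Lg^{c-2}$ of the lower central series and takes the \emph{inner} derivation $D=\ad(w)$; every term of \eqref{2} then lies in $[\Lg^{c-1},\Lg^1]\subseteq\Lg^{c}=0$. You instead build $D$ abstractly as any nonzero element of $\Hom(\Lg/\Lg^1,Z(\Lg))$, viewed as a map $\Lg\to\Lg$, after checking that such maps are exactly the derivations with central image and that centrality of the image kills every summand of \eqref{2}. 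The two proofs share the same vanishing mechanism --- indeed the paper's $\ad(w)$ is a special case of your family, since its image lies in $\Lg^{c-1}\subseteq Z(\Lg)$ and it annihilates $\Lg^1$ because $[\Lg^{c-2},\Lg^1]\subseteq\Lg^{c}=0$ --- but the constructions differ. What yours buys: it is uniform (no case split; the abelian and class-$2$ cases need no separate treatment) and it exhibits a whole linear space of solutions, namely all of $\Hom(\Lg/\Lg^1,Z(\Lg))$. What the paper's buys: the derivation produced is inner, which is the notion tied elsewhere in the paper to identities \eqref{3}, \eqref{4} and to the metabelian discussion. Both arguments tacitly require $\Lg\neq 0$ (the zero algebra has no nontrivial derivation); you make this hypothesis explicit, which is the only point where nontriviality could fail.
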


\begin{proof}
If $\Lg$ is nilpotent of class $c(\Lg)\le 2$, then obviously identity \eqref{2}, and hence also \eqref{1},
holds for every derivation. Consider the lower central series of $\Lg$. Suppose that $\Lg^k=0$ and $\Lg^{k-1}\neq 0$,
with $k\ge 3$, i.e., $c(\Lg)\ge 3$. Choose an element $w$ in $\Lg^{k-2}$ which is not in the center of 
$\Lg$. This is possible, because otherwise $\Lg^{k-2}\subseteq Z(\Lg)$, and hence $\Lg^{k-1}=[\Lg,\Lg^{k-2}]=0$, 
a contradiction. It follows that $D=\ad(w)$ is a nontrivial derivation satisfying identity \eqref{2}, because 
each term is zero. Indeed, $[[w,x],[y,z]]\in [\Lg^{k-1},\Lg^1]\subseteq \Lg^k=0$.
\end{proof}

\section{Lie algebra identities}

In this section we study Lie algebras $\Lg$ satisfying one of the
identities \eqref{1}, \eqref{2}, \eqref{3}, \eqref{4}, i.e., 
\begin{align*}
D\bigl([D(x),[y,z]]+[D(y),[z,x]]+[D(z),[x,y]] \bigr) & =0, \\
[D(x),[y,z]]+[D(y),[z,x]]+[D(z),[x,y]] & =0, \\
[z,[[z,x],[y,w]]]+[z,[[z,y],[w,x]]]+[z,[[z,w],[x,y]]] & =0, \\
[z,[[z,x],[z,y]]] & = 0,
\end{align*}
for all $x,y,z,w\in \Lg$ and not just for a given derivation, but 
{\it for all} derivations $D\in \Der(\Lg)$. 
This leads us to the theory of Lie algebra identities, which has a large literature.
Clearly we have the implications $ \eqref{2}\Rightarrow  \eqref{1} \Rightarrow  \eqref{3} \Rightarrow  \eqref{4}$.
Concerning question $\ref{2}$ , identity \eqref{1} holds 
if and only if $[x,y]_D=D([x,y])$ is a  Lie bracket for all derivations $D\in \Der(\Lg)$. \\
We start with the identity $[[z,x],[z,y]]=0$, which is a consequence of \eqref{2} by
taking $D=\ad (z)$. A Lie algebra satisfying this identity is metabelian, i.e.,
satisfies $\Lg^{(2)}=0$. This is known but rarely mentioned. Therefore it seems useful to give a proof 
here. We always assume that the field $K$ has characteristic zero, if not said otherwise.

\begin{lem}\label{4.1}
Let $\Lg$ be a Lie algebra over a field $K$ of characteristic not $2$. Then $\Lg$ is metabelian
if and only if it satisfies the identity
\[
[[z,x],[z,y]]=0
\]
for all $x,y,z\in \Lg$.
\end{lem}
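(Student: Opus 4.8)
The ``only if'' direction is immediate and requires no hypothesis on the characteristic: if $\Lg$ is metabelian, then $[z,x]$ and $[z,y]$ both lie in $\Lg'=[\Lg,\Lg]$, whence $[[z,x],[z,y]]\in[\Lg',\Lg']=\Lg^{(2)}=0$. The entire content is the converse, and this is where the assumption $\operatorname{char}K\neq 2$ will enter.

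My plan for the converse is to linearize the hypothesis and then feed the outcome into the Jacobi identity. First I would polarize in $z$: substituting $z+w$ for $z$ in $[[z,x],[z,y]]=0$ and expanding gives four terms, of which the two ``pure'' ones vanish by hypothesis (the instances for $z$ alone and for $w$ alone), leaving the \emph{polarized identity}
\[
[[z,x],[w,y]]+[[w,x],[z,y]]=0
\]
for all $x,y,z,w\in\Lg$; note this step is valid over any field. The goal is then $\Lg^{(2)}=0$, i.e. $[[a,b],[c,d]]=0$ for all $a,b,c,d\in\Lg$.

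The mechanism is to leave the space of ``brackets of two brackets'' by rewriting every such expression in left-normed form, using that $\ad[p,q]$ is a derivation:
\[
[[p,q],[r,s]]=[[[p,q],r],s]-[[[p,q],s],r].
\]
Applying the polarized identity with the two substitutions $(z,x,w,y)=(a,b,d,c)$ and $(z,x,w,y)=(b,a,c,d)$, using anticommutativity to bring $a$ to the front of each inner bracket, and rewriting everything in left-normed form, one finds that $[[a,b],[c,d]]$ is equal both to $X$ and to $-X$, where $X=[[[a,c],b],d]-[[[a,c],d],b]$. Adding these two evaluations yields $2[[a,b],[c,d]]=0$, and since $\operatorname{char}K\neq 2$ we conclude $[[a,b],[c,d]]=0$, so $\Lg$ is metabelian.

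The subtlety I expect to be the main obstacle is that the relations one reads off the polarized identity \emph{directly}---antisymmetry in the two outer arguments, symmetry in the two inner arguments, and anticommutativity of the outer bracket---are by themselves circular, merely re-expressing $[[a,b],[c,d]]$ in terms of itself. Hence the Jacobi identity must genuinely be invoked, via the derivation expansion above, to pass to left-normed monomials, where the two chosen substitutions produce two \emph{independent} relations of the form $[[a,b],[c,d]]=\pm X$. Keeping the bookkeeping of the six left-normed degree-four monomials straight is the only delicate point, and it is precisely in combining the two relations that the factor $2$ appears, which is why the hypothesis $\operatorname{char}K\neq 2$ is needed and cannot be dropped.
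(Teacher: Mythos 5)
Your proof is correct, and its engine is the same as the paper's: polarize $[[z,x],[z,y]]=0$ in $z$, then combine instances of the polarized identity with anticommutativity until $[[a,b],[c,d]]$ reappears with a minus sign, so that $2[[a,b],[c,d]]=0$ and the characteristic hypothesis finishes the argument. However, your closing paragraph has the logic backwards: the detour through the Jacobi identity and left-normed monomials is superfluous, and the ``direct'' relations are not circular. Your own two substitutions already close the argument using anticommutativity alone. Indeed, $(z,x,w,y)=(a,b,d,c)$ gives
\[
0=[[a,b],[d,c]]+[[d,b],[a,c]]=-[[a,b],[c,d]]+[[a,c],[b,d]],
\]
while $(z,x,w,y)=(b,a,c,d)$ gives
\[
0=[[b,a],[c,d]]+[[c,a],[b,d]]=-[[a,b],[c,d]]-[[a,c],[b,d]],
\]
and adding these yields $2[[a,b],[c,d]]=0$ outright. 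Passing to the left-normed expression $X$ is a pure renaming of $[[a,c],[b,d]]$ and contributes nothing; the two relations $[[a,b],[c,d]]=\pm[[a,c],[b,d]]$ are already independent as they stand. This is precisely the paper's proof, written there as a chain of four equalities,
\[
[[z,x],[w,y]]=[[w,y],[x,z]]=[[w,z],[x,y]]=[[z,w],[y,x]]=[[z,x],[y,w]]=-[[z,x],[w,y]],
\]
which invokes only the polarized identity and skew-symmetry --- the Jacobi identity never appears. So the proof stands, but you should drop the left-normed machinery and the claim that Jacobi is essential; what cannot be dropped is exactly what you identified at the end, namely division by $2$.
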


\begin{proof}
Suppose that $\Lg$ is metabelian, i.e., satisfies the identity $[[z,x],[w,y]]=0$. Setting $w=z$
we obtain the required identity. Conversely, if we assume $[[z,x],[z,y]]=0$ and formally replace $z$
by $u+v$ we obtain
\[
[[u,x],[v,y]]=[[u,y],[v,x]]
\]
for all $x,y,u,v$. Now we use this identity and skew-symmetry twice to obtain
\begin{align*}
[[z,x],[w,y]] & = [[w,y],[x,z]] \\
              & = [[w,z],[x,y]] \\
              & = [[z,w],[y,x]] \\
              & = [[z,x],[y,w]].
\end{align*}
This implies $2[[z,x],[w,y]]=0$.
\end{proof}

For Lie algebras satisfying the strongest identity, namely \eqref{2}, we obtain the following 
necessary condition.
 
\begin{prop}
Let $\Lg$ be a Lie algebra satisfying identity \eqref{2}. Then $\Lg$ is metabelian.
\end{prop}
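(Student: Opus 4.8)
The plan is to deduce metabelianity directly from the special instance of identity \eqref{2} obtained by the substitution $D=\ad(z)$, and then invoke Lemma \ref{4.1}. First I would observe that since \eqref{2} holds for all derivations $D\in\Der(\Lg)$, it holds in particular for every inner derivation $D=\ad(z)$ with $z\in\Lg$. Writing out \eqref{2} with this choice of $D$ gives, for all $x,y,w\in\Lg$,
\[
[[z,x],[y,w]]+[[z,y],[w,x]]+[[z,w],[x,y]] =0.
\]

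The key step is to produce the single-$z$ identity $[[z,x],[z,y]]=0$ from this three-term cyclic relation. The natural move is to specialise one of the free variables to $z$ itself. Setting $w=z$, the third summand $[[z,z],[x,y]]$ vanishes because $[z,z]=0$, leaving
\[
[[z,x],[y,z]]+[[z,y],[z,x]]=0.
\]
Using skew-symmetry to rewrite $[y,z]=-[z,y]$ in the first term, both terms become $-[[z,x],[z,y]]$, so the relation collapses to $-2[[z,x],[z,y]]=0$, i.e.\ $[[z,x],[z,y]]=0$ for all $x,y,z\in\Lg$, using that the characteristic is not $2$.

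Once this identity is established, I would simply cite Lemma \ref{4.1}, which states that a Lie algebra over a field of characteristic not $2$ is metabelian precisely when $[[z,x],[z,y]]=0$ holds for all $x,y,z$. This immediately yields $\Lg^{(2)}=0$, completing the argument. I do not anticipate a genuine obstacle here: the only point requiring care is the bookkeeping of signs and the factor of $2$ in the specialisation $w=z$, and the reduction to a previously proved lemma makes the remainder routine. The essential content of the proposition is really that the full Hom-Jacobi identity \eqref{2} already implies its own diagonal consequence, which is exactly the metabelian identity characterised in Lemma \ref{4.1}.
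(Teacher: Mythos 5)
Your proof is correct and follows essentially the same route as the paper: specialize identity \eqref{2} to an inner derivation, set the free variable equal to $z$ to collapse the cyclic sum to $-2[[z,x],[z,y]]=0$, and conclude via Lemma \ref{4.1}. The paper's own proof does exactly this (with $D=\ad(w)$ and then $w=z$), merely omitting the sign and factor-of-$2$ bookkeeping that you spell out.
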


\begin{proof}
Applying identity \eqref{2} for $D=\ad(w)$ we obtain
\[
[[w,x],[y,z]]+[[w,y],[z,x]]+[[w,z],[x,y]]=0
\]
for all $x,y,z,w\in \Lg$. Setting $w=z$ this implies
\[
[[z,x],[z,y]]=0
\]
for all $x,y,z\in \Lg$. By Lemma $\ref{4.1}$, $\Lg$ is metabelian.
\end{proof}

The converse is not true in general, but of course true for all inner derivations.

\begin{cor}
A Lie algebra $\Lg$ satisfies identity \eqref{2} for all inner derivations if and only if
it is metabelian.
\end{cor}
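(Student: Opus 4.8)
The plan is to prove the two implications separately, observing that both are essentially immediate from what has already been established. For the forward direction, suppose $\Lg$ satisfies \eqref{2} for all inner derivations. I would point out that the proof of the preceding Proposition in fact uses only inner derivations: specializing $D=\ad(w)$ in \eqref{2} gives
\[
[[w,x],[y,z]]+[[w,y],[z,x]]+[[w,z],[x,y]]=0
\]
for all $x,y,z,w\in\Lg$, and setting $w=z$ collapses this to $[[z,x],[z,y]]=0$ for all $x,y,z\in\Lg$. By Lemma \ref{4.1}, this identity is equivalent to $\Lg$ being metabelian. Thus nothing new is needed here beyond recording that the argument never touched non-inner derivations.

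For the reverse direction, assume $\Lg$ is metabelian, i.e.\ $\Lg^{(2)}=[[\Lg,\Lg],[\Lg,\Lg]]=0$. I would then simply observe that for any $w\in\Lg$ with $D=\ad(w)$, each of the three summands in \eqref{2} has the form $[[w,\,\cdot\,],[\,\cdot\,,\,\cdot\,]]$, that is, a bracket of two elements of the first derived algebra $[\Lg,\Lg]=\Lg^{(1)}$. Hence each summand lies in $[\Lg^{(1)},\Lg^{(1)}]=\Lg^{(2)}=0$, so all three terms vanish individually and \eqref{2} holds identically for every inner derivation.

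The only point requiring any care, and it is a very minor one, is the bookkeeping in the reverse direction: one must check that both factors entering each bracket genuinely sit in $\Lg^{(1)}$, so that the whole expression lands in $\Lg^{(2)}$. Since $[w,x]$, $[y,z]$ and their cyclic analogues are all first commutators, this is immediate, and no case analysis or machine computation is required. In fact the statement is best viewed as the exact ``inner'' counterpart of the earlier Proposition together with its failed converse, sharpening the latter to an equivalence once one restricts to inner derivations.
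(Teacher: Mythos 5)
Your proof is correct and is essentially the paper's own argument made explicit: the forward direction re-runs the preceding Proposition's proof (which indeed only uses $D=\ad(w)$ and Lemma \ref{4.1}), and the reverse direction is the observation that each term of \eqref{2} with $D=\ad(w)$ lies in $[\Lg^{(1)},\Lg^{(1)}]=\Lg^{(2)}=0$, which is exactly the ``of course true for all inner derivations'' remark in the paper.
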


The problem for the converse in general are the outer derivations of a metabelian Lie algebra.
They need not satisfy identity \eqref{2}. The following example demonstrates this, and is of minimal 
dimension with this property.

\begin{ex}\label{4.4}
Let $\Lg$ be the non-nilpotent metabelian Lie algebra of dimension $4$ with
basis $\{e_1,\ldots ,e_4\}$, defined by the brackets
\[
[e_1,e_2]=e_2,\; [e_1,e_3]=e_2,\; [e_1,e_4]=e_4,\; [e_2,e_3]=e_4.
\]
Then the outer derivations $D=\diag (0,\la,\la,2\la)$ satisfy 
\eqref{2} if and only if $\la=0$. They also satisfy  \eqref{1} if and only if $\la=0$.
\end{ex}

Indeed, we have 
\[
[D(e_1),[e_2,e_3]]+[D(e_2),[e_3,e_1]]+ [D(e_3),[e_1,e_2]]  = -2\la e_4.
\]

There are also sufficient conditions for a Lie algebra $\Lg$ to satisfy identity \eqref{2}, such
as $\Lg^2=[\Lg,[\Lg,\Lg]]=0$. One would like to find more interesting conditions, of course.
A view on low-dimensional Lie algebras already shows that it is not so easy.

\begin{prop}
All complex Lie algebras of dimension $n\le 4$ satisfy identity \eqref{2} with the exception
of $\Ls\Ll_2(\C)$, $\Lg\Ll_2(\C)$, $\Lg_3$ and $\Lg_5(\al)$, which are listed in table $1$.
\end{prop}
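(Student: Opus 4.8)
The plan is to run through the classification of complex Lie algebras of dimension $n\le 4$ (a finite list together with a few one-parameter families) and to dispose of almost all cases by the general results already proved, leaving only a handful of algebras for an explicit computation with $\Der(\Lg)$.

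First I would clear the trivial cases. For $n\le 2$ identity \eqref{2} always holds: its left-hand side $[D(x),[y,z]]+[D(y),[z,x]]+[D(z),[x,y]]$ is trilinear, so it suffices to evaluate it on basis triples, and with at most two basis vectors every such triple repeats an argument; if, say, $x=y$, the three terms reduce to $[D(x),[x,z]]+[D(x),[z,x]]=0$. Next, every algebra with $\Lg^2=[\Lg,[\Lg,\Lg]]=0$, i.e. of nilpotency class $c(\Lg)\le 2$, satisfies \eqref{2} for all derivations, since then $[y,z]\in\Lg^1$ and $[D(x),[y,z]]\in\Lg^2=0$. This settles all abelian algebras, the Heisenberg algebra $\Ln_3$, and every $4$-dimensional nilpotent algebra of class $\le 2$.

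Second, I would use the necessary condition established above, namely that \eqref{2} forces $\Lg$ to be metabelian, together with the Corollary that \eqref{2} already fails on some inner derivation as soon as $\Lg$ is not metabelian. Hence every non-metabelian algebra of dimension $\le 4$ is an exception, and in this range these are exactly the simple algebra $\Ls\Ll_2(\C)$ and its reductive extension $\Lg\Ll_2(\C)=\Ls\Ll_2(\C)\oplus\C$, which accounts for two of the four exceptions. What remains are the metabelian algebras that are not of class $\le 2$: the $3$- and $4$-dimensional solvable non-nilpotent metabelian algebras and the $4$-dimensional metabelian algebras of class $3$. For each of these the test simplifies: since \eqref{2} is linear in $D$ and, by metabelianness, holds identically on all inner derivations, it suffices to pick a complement to the inner derivations inside $\Der(\Lg)$ and to check \eqref{2} only on these outer derivations, exactly as in Example \ref{4.4}. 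Running through the list, one finds that in dimension $\le 3$ no outer derivation violates \eqref{2} (consistent with the minimality asserted in Example \ref{4.4}), while in dimension $4$ the only metabelian algebras carrying an outer derivation that violates \eqref{2} are $\Lg_3$ and $\Lg_5(\al)$, which are the two missing exceptions; all other algebras on the list satisfy \eqref{2} identically.

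The main obstacle is precisely this explicit step: computing the full derivation algebra $\Der(\Lg)$ for each surviving metabelian algebra and evaluating the outer-derivation contribution to \eqref{2}. Two points require care. One must not overlook any outer derivation, in particular the ones rescaling the abelianization, which are exactly what causes the failure in Example \ref{4.4}; and for the one-parameter family $\Lg_5(\al)$ one must confirm that the violation persists for every value of $\al$ rather than only generically. Since the inner derivations are harmless here by metabelianness, the whole content of the proposition is concentrated in this finite outer-derivation computation.
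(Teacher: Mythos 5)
Your overall strategy is sensible (the paper itself offers no written proof here; the proposition rests on a case-by-case verification recorded in Table $1$), and reducing the work via the metabelian criterion plus linearity of \eqref{2} in $D$ is a legitimate refinement. However, your case analysis contains a concrete factual error: $\Lg_3$ and $\Lg_5(\al)$ with $\al\neq 0$ are \emph{not} metabelian. For $\Lg_3$ the derived algebra is $[\Lg_3,\Lg_3]=\langle e_2,e_3,e_4\rangle$, which is a Heisenberg algebra because $[e_2,e_3]=e_4$, so $\Lg_3^{(2)}=\langle e_4\rangle\neq 0$; the same happens for $\Lg_5(\al)$ whenever $\al\neq 0$, since then $e_2$, $e_2+\al e_3$ and $e_4$ span $\langle e_2,e_3,e_4\rangle$ and again $[e_2,e_3]=e_4\neq 0$. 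Consequently your second step is wrong on both ends: the non-metabelian algebras of dimension $n\le 4$ are not ``exactly $\Ls\Ll_2(\C)$ and $\Lg\Ll_2(\C)$'' but also include $\Lg_3$ and $\Lg_5(\al)$ for $\al\neq 0$; and your third step, the sweep of outer derivations over the metabelian algebras, cannot produce $\Lg_3$ and $\Lg_5(\al)$ as failures, because (apart from $\al=0$) these algebras are not in the list being swept. The only metabelian algebra of dimension $\le 4$ that fails \eqref{2} is $\Lg_5(0)$, i.e.\ the algebra of Example \ref{4.4}, and it is the only case where outer derivations are genuinely needed --- which is precisely why the paper calls that example minimal.

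The final list of exceptions you state happens to be correct, but only because the misclassified algebras land in the ``exception'' bin by either route: by the Corollary, a non-metabelian algebra already fails \eqref{2} for some \emph{inner} derivation, and that is the correct reason why $\Lg_3$ and $\Lg_5(\al)$, $\al\neq 0$, are exceptions. As written, though, your proof asserts false statements (metabelianness of $\Lg_3$ and of $\Lg_5(\al)$ for all $\al$), and the computation you defer to would not go as promised: for these algebras the inner derivations are not ``harmless by metabelianness,'' so the claim that the whole content is concentrated in an outer-derivation check fails. To repair the argument, move $\Lg_3$ and $\Lg_5(\al)$, $\al\neq 0$, into the non-metabelian bucket (where no computation beyond $\Lg^{(2)}\neq 0$ is needed), and reserve the outer-derivation computation for the genuinely metabelian survivors such as $\Lr_{3,\la}(\C)$, $\Lg_1$, $\Lg_2(\al)$, $\Lg_4(\al,\be)$, $\Ln_4(\C)$, $\Lr_2(\C)\oplus\Lr_2(\C)$ and $\Lg_5(0)$, among which the unique failure is $\Lg_5(0)$.
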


Note that the algebra given in Example $\ref{4.4}$ is $\Lg_5(0)$. \\[0.2cm]
A finite-dimensional Lie algebra $\Lg$ is called {\it almost abelian},
if it has an abelian ideal $\La$ of codimension $1$. We may choose a basis $\{e_1,\ldots ,e_n \}$ of $\Lg$ such that
$\La=\langle e_2,\ldots ,e_n\rangle$ and $\Lg \simeq \La \rtimes \langle e_1\rangle $. 

\begin{prop}\label{4.6}
Any almost abelian Lie algebra satisfies identity \eqref{2}.
\end{prop}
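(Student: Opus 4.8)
The plan is to exploit the very rigid structure of an almost abelian Lie algebra $\Lg$. With the chosen basis $\{e_1,\dots ,e_n\}$ the subspace $\La=\langle e_2,\dots ,e_n\rangle$ is an abelian ideal of codimension one, and the entire bracket is encoded by the single operator $A:=\ad(e_1)$, which maps $\La$ into $\La$. Writing an element as $x=x_1e_1+\bar x$ with $x_1\in K$ and $\bar x\in\La$, one gets $[x,y]=x_1A\bar y-y_1A\bar x$, so in particular $[\Lg,\Lg]\subseteq\La$. Since $\La$ is abelian, every iterated bracket $[v,[y,z]]$ with $v\in\La$ vanishes. This is the observation that collapses most of identity \eqref{2}.

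Next I would record how a derivation acts relative to this splitting. For $D\in\Der(\Lg)$ write $D(x)=\phi(x)e_1+\psi(x)$ with a linear functional $\phi\colon\Lg\to K$ and a linear map $\psi\colon\Lg\to\La$. Because $[y,z]\in\La$ and $\La$ is abelian, $[\psi(x),[y,z]]=0$, and therefore $[D(x),[y,z]]=\phi(x)[e_1,[y,z]]=\phi(x)A([y,z])$. Hence the left-hand side of \eqref{2} equals $A\bigl(\phi(x)[y,z]+\phi(y)[z,x]+\phi(z)[x,y]\bigr)$, and it remains to show that the inner cyclic sum lies in $\ker A$.

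The engine of the proof is the derivation condition restricted to the abelian part. Evaluating $0=D([v,w])=[D(v),w]+[v,D(w)]$ for $v,w\in\La$ and using $[e_1,v]=Av$ yields the single relation $\phi(v)Aw=\phi(w)Av$ for all $v,w\in\La$. Substituting $[y,z]=y_1A\bar z-z_1A\bar y$ and its cyclic analogues into the inner sum and collecting the coefficients of $A\bar x,A\bar y,A\bar z$, the terms carrying $\phi(e_1)$ cancel in each coefficient, leaving $\phi(\bar y)z_1-\phi(\bar z)y_1$ in front of $A\bar x$, and cyclically. Applying $A$ once more replaces each $A\bar x$ by $A^2\bar x$, and the relation above (after applying $A$, i.e.\ $\phi(v)A^2w=\phi(w)A^2v$) makes each of the three groups, organized by the scalars $x_1,y_1,z_1$, vanish individually; for instance the $x_1$-group is $x_1\bigl(\phi(\bar z)A^2\bar y-\phi(\bar y)A^2\bar z\bigr)=0$. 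Equivalently one may argue by cases: if $\phi|_\La=0$ then all coefficients already vanish, while if $\phi|_\La\neq 0$ the relation forces $A|_\La$ to have rank at most one with image in $\ker(\phi|_\La)$, whence $A^2|_\La=0$; either way $A$ of the inner sum is zero.

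The only real subtlety, and the step I would be most careful about, is that one must genuinely use that $D$ is a derivation and not merely a linear map: the relation $\phi(v)Aw=\phi(w)Av$ on $\La$ is exactly what is needed, and it is worth double-checking the bookkeeping that the $\phi(e_1)$-contributions drop out of every coefficient, so that only this relation and its image under $A$ are required. Everything else is forced by the abelian ideal, so no genuine computation beyond this cyclic cancellation is involved.
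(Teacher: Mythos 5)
Your argument is correct, but it takes a genuinely different route from the paper's. The paper argues structurally: it introduces the annihilator $\Lh=\{x\in\Lg \mid [x,[\Lg,\Lg]]=0\}$, shows it is a characteristic ideal containing $\La$, and then splits into two cases --- either $\Lh=\Lg$, i.e.\ $c(\Lg)\le 2$, in which case every term of \eqref{2} lies in $[\Lg,[\Lg,\Lg]]=0$, or $\Lh=\La$, in which case $D(\La)\subseteq\La$ for \emph{every} derivation (in your notation $\phi|_{\La}=0$), and then each summand of \eqref{2} vanishes individually when checked on basis elements. Your computation avoids this dichotomy entirely: by splitting $D$ into $\phi$ and $\psi$, extracting the relation $\phi(v)Aw=\phi(w)Av$ from $D([\La,\La])=0$, and grouping the terms of $A\bigl(\phi(x)[y,z]+\phi(y)[z,x]+\phi(z)[x,y]\bigr)$ by the scalars $x_1,y_1,z_1$, you handle a possibly nonzero $e_1$-component of $D|_{\La}$ head-on; I have checked the bookkeeping (cancellation of the $\phi(e_1)$-terms, the three coefficients, and the vanishing of each group via $\phi(v)A^2w=\phi(w)A^2v$) and it is right. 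What the paper's proof buys is conceptual transparency --- the dichotomy explains \emph{why} the identity holds (either class $\le 2$, or $\La$ is characteristic); what yours buys is a single uniform calculation that never needs to know which case it is in, since $\phi|_{\La}\neq 0$ and $A^2|_{\La}\neq 0$ cannot obstruct simultaneously.

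One caveat about your closing ``equivalently one may argue by cases'' remark: the relation $\phi(v)Aw=\phi(w)Av$ alone does \emph{not} force the image of $A|_{\La}$ to lie in $\ker(\phi|_{\La})$. For instance, $Av=\phi(v)u$ with $\phi(u)\neq 0$ satisfies the relation for all $v,w\in\La$, yet has $A^2|_{\La}\neq 0$. To get that extra fact one must use the derivation property on the mixed brackets: comparing $e_1$-components in $D([e_1,v])=[D(e_1),v]+[e_1,D(v)]$ gives $\phi(Av)=0$ for all $v\in\La$ (equivalently, $D$ preserves the characteristic ideal $[\Lg,\Lg]=A\La$, so $\phi$ kills it). Since your primary argument --- the grouping with $\phi(v)A^2w=\phi(w)A^2v$ --- is complete and never invokes this claim, this is a blemish in a side remark rather than a gap in the proof; but as stated that alternative route is not valid.
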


\begin{proof}
Let $\Lg=\La \rtimes \langle e_1\rangle$ be an almost abelian Lie algebra of dimension $n$. 
Since $\La$ is an ideal of codimension $1$ we know that $[\Lg,\Lg]\subseteq \La$. Consider the 
annihilator of $[\Lg,\Lg]$ in $\Lg$,
\[
\Lh=\{x\in \Lg \mid [x,[\Lg,\Lg]]=0 \}.
\]
Since $[\Lg,\Lg]$ is a characteristic ideal of $\Lg$, i.e., $D([\Lg,\Lg])\subseteq [\Lg,\Lg]$ for all
derivations $D$ of $\Lg$, we have
\begin{align*}
[D(\Lh),[\Lg,\Lg]] & = D([\Lh,[\Lg,\Lg]])+[\Lh,D([\Lg,\Lg])] \\
                   & \subseteq 0 + [\Lh,[\Lg,\Lg]].
\end{align*}
This shows $D(\Lh)\subseteq \Lh$ for all $D\in \Der(\Lg)$. Taking $D=\ad(x)$ we see that $\Lh$ is an ideal
of $\Lg$. So $\Lh$ is a characteristic ideal of $\Lg$ with $\La\subseteq \Lh$. We have $\Lh=\Lg$ if and
only if $c(\Lg)\le 2$. However, for Lie algebras of nilpotency class at most $2$ we
are done. Otherwise we have $\Lh=\La$, i.e., $D(\La)\subseteq \La$ for all $D\in \Der(\Lg)$.
Now it is easy to see that the identity
\[
[D(e_i),[e_j,e_k]]+[D(e_j),[e_k,e_i]]+[D(e_k),[e_i,e_j]]=0
\]
holds for all $i,j,k$. If all $i,j,k\ge 2$, then all $[e_r,e_s]\in [\La,\La]=0$ for $r,s\in \{i,j,k \}$.
Hence we may assume that $i=1$. If both $j,k\ge 2$, then
\[
[D(e_1),[e_j,e_k]]=[D(e_j),[e_k,e_1]]=[D(e_k),[e_1,e_j]] = 0, 
\]
since $[e_j,e_k]=0$ and $D(e_j),D(e_k)\in \La$ because of $D(\La)\subseteq \La$.
Hence we may assume that $i=j=1$. But then we obtain
\[
[D(e_1),[e_1,e_k]]+[D(e_1),[e_k,e_1]]+[D(e_k),[e_1,e_1]]=0,
\] 
so that identity \eqref{2} is satisfied. \\
\end{proof}

An example for an almost abelian Lie algebra is the standard graded filiform Lie algebra
$\Lf_n$ of dimension $n\ge 3$, with basis $\{ e_1,\ldots ,e_n\}$ and Lie brackets 
$[e_1,e_i]=e_{i+1}$ for $i=2,\ldots ,n-1$.

\begin{cor}
The filiform nilpotent Lie algebra $\Lf_n$ satisfies identity \eqref{2} for every $n\ge 3$.
\end{cor}

\begin{rem}
The above result shows that the nilpotency class of a Lie algebra satisfying identity \eqref{2}
can be arbitrarily large, whereas the solvability class is bounded by $2$.
\end{rem}

It is easy to verify the following result for low-dimensional nilpotent Lie algebras.
The notation is taken from Magnin \cite{MAG}.

\begin{prop}
Every complex nilpotent Lie algebra of dimension $n\le 5$ satisfies identity \eqref{2}.
In dimension $6$ all nilpotent algebras satisfy \eqref{2} with the exception of
$\Lg_{6,9}$, $\Lg_{6,13}$, $\Lg_{6,15}$, $\Lg_{6,18}$, $\Lg_{6,19}$, and $\Lg_{6,20}$.
\end{prop}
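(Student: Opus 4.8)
The plan is to treat this as a finite verification carried out over Magnin's classification \cite{MAG} of complex nilpotent Lie algebras in dimensions $n\le 6$, after first cutting down the work with a few structural reductions. The starting observation is that the Hom-Jacobi expression
\[
J_D(x,y,z)=[D(x),[y,z]]+[D(y),[z,x]]+[D(z),[x,y]]
\]
is $K$-linear in $D$ and alternating and trilinear in $(x,y,z)$. Hence, for a fixed $\Lg$ with basis $\{e_1,\ldots,e_n\}$, identity \eqref{2} holds for all $D\in\Der(\Lg)$ if and only if $J_D(e_i,e_j,e_k)=0$ for every $D$ ranging over a fixed basis of $\Der(\Lg)$ and every ascending triple $i<j<k$. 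This reduces the problem to finitely many explicit vector evaluations per algebra.

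The first reduction disposes of the inner derivations once and for all. By the preceding Corollary, $\Lg$ satisfies \eqref{2} for all inner derivations precisely when $\Lg$ is metabelian; and every nilpotent Lie algebra of dimension $\le 5$ is metabelian, since $\Lg^{(2)}=[[\Lg,\Lg],[\Lg,\Lg]]$ lies in $\Lg^3$ and a dimension count in the associated graded forces it to vanish in these low dimensions. Thus for $n\le 5$ the inner part is automatic. By the Proposition above, any $\Lg$ that is not metabelian already fails \eqref{2}; this accounts at once for those among the six dimension-$6$ exceptions that are non-metabelian. For the metabelian algebras the genuine content lies entirely in the outer derivations, exactly as in Example \ref{4.4}.

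The second reduction removes many metabelian cases without computation. By Proposition \ref{4.6} every almost abelian Lie algebra satisfies \eqref{2}, and every nilpotent Lie algebra of class $c(\Lg)\le 2$ satisfies \eqref{2} for trivial degree reasons; both families are abundant in low dimension and may be struck from the list. For each remaining metabelian isomorphism type I would first solve the linear system $D([e_i,e_j])=[D(e_i),e_j]+[e_i,D(e_j)]$ to obtain a basis of $\Der(\Lg)$, and then evaluate $J_D(e_i,e_j,e_k)$ on the ascending triples. For the algebras claimed to satisfy \eqref{2} all these values vanish; for each of the six exceptions $\Lg_{6,9},\Lg_{6,13},\Lg_{6,15},\Lg_{6,18},\Lg_{6,19},\Lg_{6,20}$ one exhibits a single outer derivation $D$ and a single triple $(e_i,e_j,e_k)$ with $J_D(e_i,e_j,e_k)\neq 0$, in the spirit of the computation $[D(e_1),[e_2,e_3]]+[D(e_2),[e_3,e_1]]+[D(e_3),[e_1,e_2]]=-2\la e_4$ recorded after Example \ref{4.4}.

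I expect the main obstacle to be organizational rather than conceptual: no single structural criterion settles all of Magnin's types, so once the almost abelian and class-$\le 2$ families are removed one is left with a residue of metabelian algebras whose full derivation algebras must be computed correctly and whose finitely many Hom-Jacobi values must be checked, best done with a computer algebra system as elsewhere in the paper. The delicate point is precisely the outer derivations of the metabelian exceptions, where an incomplete determination of $\Der(\Lg)$ would give a wrong verdict; once $\Der(\Lg)$ is in hand, producing a witnessing pair $(D,(e_i,e_j,e_k))$ for each of the six exceptions and confirming vanishing for all remaining algebras is routine.
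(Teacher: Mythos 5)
Your proposal is correct and takes essentially the same route as the paper: the paper gives no written argument beyond asserting the result ``is easy to verify'' by direct computation over Magnin's classification, which is exactly the finite case-by-case verification you organize. Your structural shortcuts (the metabelian criterion disposing of inner derivations, the class-$\le 2$ and almost abelian reductions, and the exclusion of non-metabelian algebras) are all lemmas the paper itself establishes, so they streamline rather than replace the intended computation.
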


One can obtain a similar result for dimension $7$ by using
the classification list of Magnin - see table $2$. We have shortened Magnin's
notation there by omitting the dimension index $7$. \\
There is one interesting infinite family
of Lie algebras, depending on a complex parameter $\lambda$, where identity \eqref{2}
holds precisely for one singular value of $\la$. The family is $\Lg_{7,1.2(i_{\la})}$ in Magnin's notation:

\begin{ex}
For $\la\in \C$ let $\Lg_{\la}$ denote the following complex $7$-dimensional nilpotent Lie algebra given by the Lie brackets
\begin{align*}
[x_1,x_2] & = x_4,\; [x_1,x_3]=x_6,\; [x_1,x_4]=x_5,\; [x_1,x_5]=x_7, \\
[x_2,x_3] & = \la x_5,\; [x_2,x_4]=x_6,\; [x_2,x_6]=x_7,\; [x_3,x_4]=(1-\la)x_7. \\
\end{align*}
Then $(\ref{1}) \text{ holds }\Leftrightarrow (\ref{2})\text{ holds } \Leftrightarrow \la=1$.
\end{ex}

Note that we have $c(\Lg_{\la})=4$, $d(\Lg_{\la})=2$ for all $\la\in \C$, and
\[
\dim {\rm Der}(\Lg_{\la})=\begin{cases} 13, \text{ for } \la=-1 \\ 12, \text{ for } \la\neq -1 \end{cases}.
\]
One might ask for invariants which differ exactly for $\la=1$ and $\la\neq 1$. 
For the $(t,1,1)$-space of generalized derivations with $t\neq 0,1,-1,2$ we have
\[
\dim {\rm Der}_{(t,1,1)}(\Lg_{\la})=\begin{cases} 12, \text{ for } \la=1 \\ 11, \text{ for } \la\neq 1 \end{cases},
\]
see \cite{BU44}, \cite{ZUS} for more on generalized derivations. However, there seems to be no relation in general 
between these spaces and identity \eqref{1} or \eqref{2}. \\
Identity \eqref{1} is weaker than identity \eqref{2} in general. Indeed, a Lie algebra satisfying identity
\eqref{1} need not be metabelian as we have already seen in the cases of $\Ls\Ll_2(\C)$ and $\Lg\Ll_2(\C)$, see table $1$.
However, for nilpotent Lie algebras of dimension $n\le 6$ they are equivalent, and also
for nilpotent Lie algebras of dimension $7$ which are not CNLAs. 
This follows from an easy but lengthy computation, see table $2$.

\begin{prop}
A complex nilpotent Lie algebra of dimension $n\le 6$ satisfies identity
\eqref{1} if and only it satisfies identity \eqref{2}. In dimension $7$ this holds true
for all complex nilpotent Lie algebras which are not a CNLA.  
\end{prop}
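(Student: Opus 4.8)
The plan is to prove only the converse implication, since $\eqref{2}\Rightarrow\eqref{1}$ always holds; equivalently, I will verify the contrapositive, that every complex nilpotent Lie algebra (of dimension $\le 6$, or of dimension $7$ and not a CNLA) which fails \eqref{2} also fails \eqref{1}. Write $J_D(x,y,z)=[D(x),[y,z]]+[D(y),[z,x]]+[D(z),[x,y]]$ for the Hom-Jacobiator, so that \eqref{2} says $J_D\equiv 0$ and \eqref{1} says $D(J_D)\equiv 0$, for all $D\in\Der(\Lg)$. Since $[y,z]\in\Lg^1$, the obstruction always lands in $\Lg^2=[\Lg,\Lg^1]$; in particular both identities are automatic when $c(\Lg)\le 2$, so only algebras of class $\ge 3$ need attention. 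Combining this with the preceding proposition shrinks the task considerably: in dimension $\le 5$ every algebra satisfies \eqref{2}, hence \eqref{1}, and nothing remains to check; in dimension $6$ it suffices to treat the six exceptions $\Lg_{6,9},\Lg_{6,13},\Lg_{6,15},\Lg_{6,18},\Lg_{6,19},\Lg_{6,20}$; and in dimension $7$ it suffices to treat the non-CNLA algebras that fail \eqref{2}.

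For each such algebra I would proceed computationally from the classification of Magnin. First compute $\Der(\Lg)$ explicitly as a space of matrices with free parameters. A generic $D\in\Der(\Lg)$ then makes $J_D$, and hence $D(J_D)$, into vectors depending polynomially on these parameters and on the coordinates of $x,y,z$; condition \eqref{2} is the (linear in $D$) statement that all components of $J_D$ vanish identically, while \eqref{1} is the (quadratic in $D$) statement that all components of $D(J_D)$ vanish identically. Both reduce, after fixing bases of $\Lg$ and of $\Der(\Lg)$, to a finite check in linear and quadratic algebra. To witness failure of \eqref{1} it is enough to exhibit a single derivation $D$ with $D(J_D)\neq 0$; recording one such $D$ for each of the listed algebras is exactly the content of table $2$. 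When proving the positive direction it is convenient to polarize \eqref{1}: since it holds for all derivations it also holds for $D+D'$, which yields the bilinear identity
\[
D\bigl(J_{D'}(x,y,z)\bigr)+D'\bigl(J_D(x,y,z)\bigr)=0
\]
for all $D,D'\in\Der(\Lg)$, turning the quadratic condition into a family of linear ones.

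The conceptual engine, and the reason CNLAs must be excluded, is the presence of a nonzero semisimple derivation. An algebra is a CNLA exactly when every derivation is nilpotent, so a non-CNLA carries a nonzero semisimple derivation $S$ (the semisimple part of any non-nilpotent derivation, which is again a derivation in characteristic zero), giving an eigenspace grading $\Lg=\bigoplus_\la\Lg_\la$. On a homogeneous triple with $x\in\Lg_a$, $y\in\Lg_b$, $z\in\Lg_c$ the Jacobi identity gives
\[
J_S(x,y,z)=(a-c)[x,[y,z]]+(b-c)[y,[z,x]],\qquad S\bigl(J_S(x,y,z)\bigr)=(a+b+c)\,J_S(x,y,z),
\]
so \eqref{1} applied to $S$ forces $J_S$ to vanish on every homogeneous triple of nonzero total weight, i.e.\ it already forces \eqref{2} there. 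This mechanism collapses exactly for CNLAs: there no such $S$ exists, \eqref{1} in fact holds automatically (Proposition~\ref{4.12}), yet \eqref{2} can fail, for instance for any non-metabelian CNLA, since \eqref{2} forces metabelianness. Because CNLAs first occur in dimension $7$, this is why the statement is unconditional through dimension $6$ but must exclude CNLAs in dimension $7$. The main obstacle is that this grading argument does not by itself dispose of homogeneous triples of total weight zero, nor of algebras lacking a rich enough torus of derivations; together with the size of Magnin's dimension-$7$ list and its parametric families (such as $\Lg_\la$, where \eqref{1} and \eqref{2} both hold precisely at $\la=1$), this forces a case-by-case verification, best carried out with a computer algebra system, which is the lengthy part recorded in table $2$.
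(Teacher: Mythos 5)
Your proposal takes essentially the same route as the paper: the paper's entire proof of this proposition is the ``easy but lengthy computation'' over Magnin's classification recorded in table $2$, which is precisely the case-by-case verification (compute $\Der(\Lg)$, check \eqref{2} as a linear condition and \eqref{1} as a quadratic one, exhibit a witness derivation on failure) that your plan reduces to. Your preliminary reductions are all valid and go beyond what the paper writes down --- the triviality for class $\le 2$, the use of the preceding proposition to shrink the list, the polarization of \eqref{1}, and the semisimple-derivation grading explaining why CNLAs must be excluded --- but since you correctly concede that the grading argument neither handles zero-weight triples nor replaces the exhaustive check, the substance of the proof coincides with the paper's.
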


For CNLAs of dimension $7$ identity \eqref{1} always holds, but identity \eqref{2} does not.

\begin{prop}\label{4.12}
Every complex CNLA of dimension $7$ satisfies identity \eqref{1}.
\end{prop}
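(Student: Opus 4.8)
The plan is to combine the nilpotency of the derivations of a CNLA with the filtration by the lower central series, reducing the verification to the finitely many algebras of Magnin's classification (table $2$). Recall that for a CNLA every $D\in\Der(\Lg)$ is nilpotent by definition, and that identity \eqref{1} says exactly that the Hom-Jacobiator $J_D(x,y,w)=[D(x),[y,w]]+[D(y),[w,x]]+[D(w),[x,y]]$ lies in $\ker D$ for all $x,y,w$. Since $[y,w]\in\Lg^1$ and $D(x)\in\Lg$, one has $J_D(x,y,w)\in\Lg^2$ for every derivation, so the whole problem is to show that $D$ annihilates this particular element of $\Lg^2$.

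First I would pin down the behaviour of a nilpotent derivation $D$ along the lower central series. Each $\Lg^i$ is a characteristic ideal, so $D(\Lg^i)\subseteq\Lg^i$, and $D$ induces a nilpotent endomorphism $\bar D_i$ on $\Lg^i/\Lg^{i+1}$. Whenever $\dim(\Lg^i/\Lg^{i+1})=1$ this forces $\bar D_i=0$, i.e. $D(\Lg^i)\subseteq\Lg^{i+1}$; in particular the top central piece $\Lg^{c-1}$, where $c=c(\Lg)$, is one-dimensional in the filiform cases and is killed by $D$. Using $[\Lg^a,\Lg^b]\subseteq\Lg^{a+b+1}$ together with $D^2(\Lg^0)\subseteq\Lg^1$ (valid when $\Lg/\Lg^1$ is two-dimensional) and $D(\Lg^i)\subseteq\Lg^{i+1}$ for $i\ge 1$, one evaluates $J_D$ on basis triples and pushes it up the filtration: for the filiform $7$-dimensional CNLAs the Hom-Jacobiator $J_D(x,y,w)$ sits in $\Lg^3$, and $D(J_D)$ then lands in $\Lg^4$, leaving only the leading graded component to be controlled.

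The hard part is exactly this leading component. The obstruction has two sources: the abelianisation $\Lg/\Lg^1$ is two-dimensional, so $\bar D_0$ need not vanish and $D$ may mix the two generators (a term $D(e_2)\equiv\mu e_1\pmod{\Lg^1}$ with $\mu\neq 0$), and the off-diagonal maps $\Lg^i/\Lg^{i+1}\to\Lg^{i+1}/\Lg^{i+2}$ induced by $D$ are not forced to be zero by nilpotency alone. These two effects combine precisely into the $\Lg^4/\Lg^5$-component of $D(J_D)$, and showing that it always cancels is where a uniform, purely filtration-theoretic argument stalls; for non-filiform CNLAs, and for any three-generated examples, a middle graded piece of dimension two weakens the degree count in the same way and one loses even $D^2(\Lg)\subseteq\Lg^1$.

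To finish I would fall back on the classification. For each complex $7$-dimensional CNLA in Magnin's list I would compute the full derivation algebra $\Der(\Lg)$ (a nilpotent Lie algebra of nilpotent operators), write a generic $D$ as a matrix whose entries satisfy the derivation equations, and check \eqref{1} symbolically; the structural reductions above cut the number of monomials drastically and also explain conceptually why \eqref{1} survives while \eqref{2} fails, namely that the extra application of $D$ is exactly what lifts $J_D\in\Lg^2$ into $\ker D$. I expect the genuine difficulty, and the bulk of the labour, to lie in this case-by-case verification and in confirming that the leading term identified above vanishes for every admissible $D$, rather than in any single conceptual step.
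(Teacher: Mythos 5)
Your proposal ultimately rests on the same argument as the paper: a case-by-case symbolic verification of identity \eqref{1} over Magnin's classification of $7$-dimensional indecomposable nilpotent complex Lie algebras, which is precisely what the paper does (it offers no conceptual proof either, referring only to the computations recorded in table $2$). The filtration-theoretic reductions you add (nilpotency of all derivations, invariance of the lower central series, vanishing on one-dimensional graded pieces) are correct as far as they go, but since you rightly concede they stall before closing the argument, the classification check remains the actual proof, exactly as in the paper.
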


In general, the last result is not true in higher dimension.

\begin{ex}\label{4.13}
Let $\Lg$ be the filiform nilpotent Lie algebra of dimension $8$ defined by the brackets
\begin{align*}
[x_1,x_i] & = x_{i+1},\; i=2,\ldots ,7, \\
[x_2,x_3] & = x_5+x_6,\; [x_2,x_4]=x_6+x_7,\; [x_2,x_5]=2x_7+x_8, \\
[x_2,x_6] & = 3 x_8,\; [x_3,x_4]=-x_7,\; [x_3,x_5]=-x_8. 
\end{align*}
Then $\Lg$ is a CLNA which does not satisfy identity \eqref{1}.
\end{ex}

In fact, $\Lg$ does not even satisfy identity \eqref{4}, since we have
\[
[x_1,[[x_1,x_2],[x_1,x_3]]]=-x_8.
\]
We have $c(\Lg)=7$ and $d(\Lg)=3$. \\[0.2cm]
Let us finally discuss the identities \eqref{3} and \eqref{4}. They have been studied
by many authors in connection with identities in $\Ls\Ll_2(K)$. Identity \eqref{4} appears
in the basis for identities of $\Ls\Ll_2(K)$ found by Razmyslov \cite{RAZ}:

\begin{thm}
Let $K$ be a field of characteristic zero. A finite basis of identities for the Lie
algebra $\Ls\Ll_2(K)$ is given by identity \eqref{4} and the standard identity of degree $5$,
\[
\sum_{\pi\in S_4}(-1)^{\pi}[x_{\pi(1)}, [x_{\pi(2)},[x_{\pi(3)},[x_{\pi(4)},x_0]]]]=0
\]
for all $x_i\in \Ls\Ll_2(K)$.
\end{thm}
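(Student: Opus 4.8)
The plan is to phrase the statement as an equality of $T$-ideals and to settle it by a codimension count, which is the standard route in characteristic zero. Write $T=T(\Ls\Ll_2(K))$ for the $T$-ideal of all identities of $\Ls\Ll_2(K)$ in the free Lie algebra, and let $\Gamma$ be the $T$-ideal generated by identity \eqref{4} together with the degree-$5$ standard identity $s$ displayed in the statement. The assertion is $\Gamma=T$. The inclusion $\Gamma\subseteq T$ is just the claim that $\Ls\Ll_2(K)$ satisfies both generators. For \eqref{4} this is immediate, since $\ad(z)$ is a classical $R$-matrix on $\Ls\Ll_2(K)$ for every $z$ and hence \eqref{3}, and a fortiori \eqref{4}, holds there. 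For $s$ I would rewrite it as $s_4(\ad x_1,\ldots,\ad x_4)\,x_0$, the associative standard polynomial of degree $4$ evaluated on adjoint operators, and verify that it vanishes; since $\ad\,\Ls\Ll_2(K)$ is a three-dimensional space of operators, this is either a finite check on the basis $(e_1,e_2,e_3)$ using multilinearity, or an Amitsur--Levitzki--type collapse of a length-$4$ alternation.

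For the hard inclusion $T\subseteq\Gamma$ I would, as usual over a field of characteristic zero, reduce to the multilinear components. Both $T$ and $\Gamma$ are generated by their intersections with the spaces $P_n$ of multilinear Lie polynomials of degree $n$, and each $P_n$ is a completely reducible $S_n$-module. It therefore suffices to prove that the codimensions coincide, $c_n(\Gamma):=\dim P_n/(\Gamma\cap P_n)=\dim P_n/(T\cap P_n)=:c_n(\Ls\Ll_2(K))$ for every $n$; complete reducibility then upgrades the equality of dimensions to $\Gamma\cap P_n=T\cap P_n$, whence $\Gamma=T$. Since $\Gamma\subseteq T$ already gives $c_n(\Gamma)\ge c_n(\Ls\Ll_2(K))$, the entire problem is the sharp upper bound $c_n(\Gamma)\le c_n(\Ls\Ll_2(K))$.

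This rests on two inputs. First I would compute $c_n(\Ls\Ll_2(K))$ using the representation theory of $\Ls\Ll_2(K)$: its cocharacter $\chi_n=\sum_\lambda m_\lambda\chi_\lambda$ is supported on partitions $\lambda$ with at most three parts, with explicitly computable multiplicities $m_\lambda$, giving a closed form for $c_n$. Second, and this is the crux, I would show that modulo $\Gamma$ the space $P_n$ is spanned by at most $c_n(\Ls\Ll_2(K))$ elements. Here the standard identity $s$ does the coarse work: an alternation over four variables corresponds to a column of length four in a Young diagram, so $s$ annihilates every shape with four or more rows modulo $\Gamma$, confining the multilinear cocharacter of $\Gamma$ to partitions with at most three parts, in agreement with the support of $\chi_n$. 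Identity \eqref{4}, after full polarization (note that \eqref{4} is \emph{not} multilinear, as $z$ occurs three times, so one must first pass to its complete linearization and its multilinear consequences), is then used to cut the multiplicity inside each three-row shape down to the value $m_\lambda$. In practice this is a straightening argument putting an arbitrary left-normed multilinear bracket into a normal form indexed by exactly the combinatorial data that labels a basis of the multilinear part of the relatively free algebra of $\Ls\Ll_2(K)$.

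The main obstacle is precisely this last step: the sharp inequality $c_n(\Gamma)\le c_n(\Ls\Ll_2(K))$, i.e. showing that the two generators already force every three-row multiplicity down to its true value and leave no room for a further independent identity. This is the technical heart of Razmyslov's theorem. A clean route, and the one I would attempt, is to pass through the weak identities of the pair $(M_2(K),\Ls\Ll_2(K))$: a weak identity is an associative polynomial vanishing under substitutions from the trace-zero matrices but evaluated in all of $M_2(K)$, and the Lie identities of $\Ls\Ll_2(K)$ are controlled by these. This transports the problem to an associative setting where the identities of $M_2(K)$ and the corresponding $GL$-module structure are explicitly understood, one produces the normal form there, and then carries it back to the free Lie algebra. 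Once the two codimension sequences are shown to agree, the reduction above closes the argument.
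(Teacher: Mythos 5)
The first thing to note is that the paper does not prove this statement at all: it is Razmyslov's theorem, quoted with a citation to \cite{RAZ} (and to \cite{FIL1} for the extension to characteristic $\neq 2$). So your proposal can only be measured against the literature, not against an argument in the text.

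Measured that way, your framework is the correct and standard one --- it is essentially Razmyslov's own route. The easy inclusion is handled properly: identity \eqref{4} holds in $\Ls\Ll_2(K)$ (for instance by the paper's rank-one computation, since every $\ad(z)$ is a classical $R$-matrix there), and the degree-$5$ standard identity vanishes because it equals $s_4(\ad x_1,\ldots,\ad x_4)\,x_0$ with $s_4$ multilinear and alternating in four operators lying in the three-dimensional space $\ad \Ls\Ll_2(K)$. The reduction to multilinear components in characteristic zero and the codimension comparison are also sound (in fact complete reducibility is not needed at that point: $\Gamma\cap P_n\subseteq T\cap P_n$ together with equal finite codimension already forces equality of the subspaces). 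But there is a genuine gap, and it sits exactly where you locate it: the sharp bound $c_n(\Gamma)\le c_n\bigl(\Ls\Ll_2(K)\bigr)$. Your proposal names this as the technical heart, proposes to ``attempt'' it via weak identities of the pair $\bigl(M_2(K),\Ls\Ll_2(K)\bigr)$, and stops. No normal form modulo $\Gamma$ is constructed, no multiplicity bound inside the three-row shapes is derived from the full linearization of \eqref{4}, and even the coarse claim that the standard identity kills all shapes with four or more rows \emph{modulo $\Gamma$} (rather than merely modulo $T$) is asserted, not proved. Since everything else in the argument is routine, this missing step is not a detail --- it is the theorem. As written, the proposal establishes only $\Gamma\subseteq T$, i.e., that the two polynomials are indeed identities of $\Ls\Ll_2(K)$, and reduces the converse inclusion to precisely the statement that was to be proved.
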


This theorem was generalized by Fillipov  \cite{FIL1} to arbitrary fields $K$ of characteristic not $2$.
Moreover he showed that all such identities for $\Ls\Ll_2(K)$ are a consequence of one
single identity, namely 

\begin{align}\label{6}
[z,[[w,x],[w,y]]] & = [w,[[z,w],[x,y]]].
\end{align} 

This is related to our identity \eqref{3} as follows.

\begin{prop}
Identity \eqref{6} is a consequence of identity \eqref{3}.
\end{prop}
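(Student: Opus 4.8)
The plan is to exploit that both identities have total degree $5$: identity \eqref{3} is quadratic in its distinguished variable $z$, with one copy appearing as the outermost bracket and one copy inside, whereas \eqref{6} is quadratic in $w$ but with \emph{both} copies sitting inside the innermost brackets. Since any single substitution into \eqref{3} necessarily leaves one copy of the doubled variable in the outer position, there is no hope of producing the left-hand term $[z,[[w,x],[w,y]]]$ of \eqref{6} by a bare substitution. I would therefore first polarize \eqref{3} and then combine the polarized consequence with one direct substitution into \eqref{3} itself.

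Writing \eqref{3} compactly as $\sum_{\mathrm{cyc}(a,b,c)}[z,[[z,a],[b,c]]]=0$ and replacing $z$ by $z+u$, I would collect the terms bilinear in $z$ and $u$ to obtain the fully multilinear identity
\[
\sum_{\mathrm{cyc}(a,b,c)}\Bigl([z,[[u,a],[b,c]]]+[u,[[z,a],[b,c]]]\Bigr)=0,
\]
valid for all $z,u,a,b,c\in\Lg$. Specializing this with the cyclic triple $(a,b,c)=(x,w,y)$ and keeping $z,u=z,w$, the summand containing $[w,w]$ drops out and two surviving terms coincide after using skew-symmetry of the bracket. Abbreviating the two sides of \eqref{6} by $X=[z,[[w,x],[w,y]]]$ and $Y=[w,[[z,w],[x,y]]]$, and the two spurious terms by $P=[w,[[z,x],[w,y]]]$ and $Q=[w,[[z,y],[w,x]]]$, this collapses to the first relation $2X+P-Q-Y=0$.

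Next I would substitute directly into \eqref{3} itself, now taking the repeated variable to be $w$ and the cyclic triple to be $(z,x,y)$. Rewriting each of the three summands with skew-symmetry identifies them, up to sign, with $-Y$, $Q$ and $-P$, yielding the second relation $P-Q=-Y$. Inserting this into the first relation gives $2X-2Y=0$, and since the base field has characteristic zero I may divide by $2$ to conclude $X=Y$, which is exactly \eqref{6}.

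The one genuinely delicate point is the elimination of the extra terms: polarization by itself produces the spurious combination involving $P$ and $Q$, and it is not obvious a priori that these can be removed. The observation that closes the argument is that a single \emph{unpolarized} substitution into \eqref{3}, with $w$ in the repeated slot, controls precisely the difference $P-Q$ in terms of $Y$, so the two relations together pin down $X-Y$. Everything else is routine bookkeeping with skew-symmetry, and the Jacobi identity enters only implicitly through the assumed validity of \eqref{3}.
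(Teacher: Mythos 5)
Your proof is correct and takes essentially the same route as the paper's: both polarize \eqref{3} in the repeated variable, specialize the auxiliary variable to $w$ (killing the $[w,w]$ term and doubling $X$), and then invoke \eqref{3} once more with $w$ in the repeated slot to eliminate the cross terms, arriving at $2X-2Y=0$ and dividing by $2$. The only difference is bookkeeping: your explicit labels $X,Y,P,Q$ and the two displayed relations versus the paper's in-line cancellation using \eqref{3} with $z$ and $w$ interchanged.
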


\begin{proof}
Formally replacing $z$ by $z+v$ in \eqref{3} gives
\begin{align*}
0 & = [z+v,[[z+v,x],[y,w]]] + [z+v,[[z+v,y],[w,x]]] + [z+v,[[z+v,w],[x,y]]] \\
  & = [z,[[z,x],[y,w]]]+ [z,[[v,x],[y,w]]]+ [v,[[z,x],[y,w]]]+[v,[[v,x],[y,w]]] \\
  & + [z,[[z,y],[w,x]]]+ [z,[[v,y],[w,x]]]+ [v,[[z,y],[w,x]]]+[v,[[v,y],[w,x]]] \\
  & + [z,[[z,w],[x,y]]]+ [z,[[v,w],[x,y]]]+ [v,[[z,w],[x,y]]]+[v,[[v,w],[x,y]]]. 
\end{align*}
Applying \eqref{3} for the first and last column of terms we obtain
\begin{align*}
0 & = [z,[[v,x],[y,w]]] + [z,[[v,y],[w,x]]] + [z,[[v,w],[x,y]]] \\
  & + [v,[[z,x],[y,w]]] + [v,[[z,y],[w,x]]] + [v,[[z,w],[x,y]]].
\end{align*}
Setting $v=w$ and applying \eqref{3} with $z$ and $w$ interchanged, i.e., in the form 
$$
[w,[[w,x],[y,z]]]+[w,[[w,y],[z,x]]=[w,[z,w],[x,y]]],
$$
we obtain 
\begin{align*}
0 & = 2[z,[[w,x],[y,w]]]+ [w,[[z,x],[y,w]]] \\
  & + [w,[[z,y],[w,x]]]+[w,[[z,w],[x,y]] \\
  & = 2[z,[[w,x],[y,w]]]+2[w,[[z,w],[x,y]]].
\end{align*}
This is identity \eqref{6}.
\end{proof}

Identity \eqref{4} has been studied further, but mostly for simple and semisimple algebras. 
Filippov \cite{FIL2} termed algebras satisfying identity \eqref{4} also {\it $h_0$-algebras}, and algebras satisfying
identity \eqref{6} also {\it $h$-algebras}. Another term for the variety of $h$-algebras is given
by ${\rm var}(\Ls\Ll_2(K))$. This variety and its subvarieties have also been studied by several
authors, see \cite{DRE} and the references therein. \\
A study of identities \eqref{3} and \eqref{4} for solvable and nilpotent Lie algebras seems to be less known.
Table $1$ shows the result for complex Lie algebras of dimension $n\le 4$.

\begin{prop}
All complex Lie algebras of dimension $n\le 4$ satisfy identity \eqref{3}, and hence also \eqref{4}, except
for $\Lg_3$ and $\Lg_5(\al)$ with $\al\neq 0,-1$.
\end{prop}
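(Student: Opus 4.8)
The plan is to reduce the verification to a handful of exceptional algebras and to dispatch those individually. By the classification of complex Lie algebras of dimension $n\le 4$ recorded in the earlier proposition, every such algebra satisfies identity \eqref{2} with the sole exceptions $\Ls\Ll_2(\C)$, $\Lg\Ll_2(\C)$, $\Lg_3$ and $\Lg_5(\al)$. Since the implication $\eqref{2}\Rightarrow\eqref{3}$ is already established, every algebra outside this list of four satisfies \eqref{3} automatically, and thereby also \eqref{4}. Hence it suffices to decide identity \eqref{3} for these four algebras, resolving the dependence on the parameter $\al$ in the last case.

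For $\Ls\Ll_2(\C)$ the proposition on the rank one case shows that $R=\ad(z)$ is a classical $R$-matrix for every $z$; by the corollary relating the bracket $[x,y]_D=[z,[x,y]]$ to identity \eqref{3}, this is exactly the statement that \eqref{3} holds for $\Ls\Ll_2(\C)$. For $\Lg\Ll_2(\C)\cong\Ls\Ll_2(\C)\oplus Z$ with $Z$ the one-dimensional centre, I would write $z=z'+c$ with $z'\in\Ls\Ll_2(\C)$ and $c\in Z$. Since $c$ brackets trivially with all of $\Lg$, each of the three terms of \eqref{3} depends only on $z'$, and the identity reduces verbatim to the case of $\Ls\Ll_2(\C)$ just settled. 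This explains why the two non-solvable exceptions to \eqref{2} are not exceptions to \eqref{3}.

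It then remains to treat $\Lg_3$ and $\Lg_5(\al)$ by direct computation. Here I would exploit that the left-hand side $F(z,x,y,w)$ of \eqref{3} is multilinear of degree one in each of $x,y,w$ and homogeneous of degree two in $z$; consequently \eqref{3} holds for all arguments if and only if, writing $z=\sum_i z_i e_i$ in the basis of table $1$, the expression $F(z,e_j,e_k,e_l)$ vanishes identically as a polynomial in the $z_i$ for every basis triple $(e_j,e_k,e_l)$. Substituting the structure constants of $\Lg_3$ yields a nonzero such polynomial, so $\Lg_3$ fails \eqref{3}; substituting those of $\Lg_5(\al)$ yields obstruction polynomials in the $z_i$ whose coefficients are polynomials in $\al$. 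For $\al=0$ the algebra is the metabelian algebra of Example \ref{4.4}, where each term of \eqref{3} already lies in $[[\Lg,\Lg],[\Lg,\Lg]]=0$, so \eqref{3} holds trivially; this gives a useful consistency check on the computation.

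The main obstacle is pinning down the exact singular set $\al\in\{0,-1\}$ for the family $\Lg_5(\al)$. This requires carrying the parameter $\al$ through the quadratic-in-$z$ expansion and checking that the obstruction polynomials factor so as to vanish simultaneously precisely at these two values, with $\al=-1$ yielding the less obvious case in which \eqref{3} holds although the algebra is not metabelian. Once the two exceptional families $\Lg_3$ and $\Lg_5(\al)$ with $\al\neq 0,-1$ are identified as the only failures of \eqref{3}, the remaining assertion on \eqref{4} follows immediately from the implication $\eqref{3}\Rightarrow\eqref{4}$.
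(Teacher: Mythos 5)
Your proposal is correct, and it takes a genuinely different route from the paper: the paper offers no structural reduction at all for this proposition, but simply records the outcome of direct case-by-case computations in Table 1, supplemented by the observation (made in the paragraph following the statement) that every center-by-metabelian Lie algebra, i.e.\ one with $\Lg^{(2)}\subseteq Z(\Lg)$, satisfies \eqref{3} and \eqref{4} term by term. Your reduction via the earlier proposition on identity \eqref{2} and the implication chain $\eqref{2}\Rightarrow\eqref{1}\Rightarrow\eqref{3}$ shrinks the problem to the four algebras $\Ls\Ll_2(\C)$, $\Lg\Ll_2(\C)$, $\Lg_3$, $\Lg_5(\al)$, and your treatment of $\Ls\Ll_2(\C)$ by the rank-one $R$-matrix proposition, with $\Lg\Ll_2(\C)\cong\Ls\Ll_2(\C)\oplus\C$ reduced to it by splitting off the center, is clean and buys conceptual economy over brute tabulation. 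The one place you stop short --- the ``main obstacle'' of pinning down the singular set $\al\in\{0,-1\}$ --- is precisely where the paper's center-by-metabelian remark closes the argument without any expansion in the coordinates of $z$: for $\al=-1$ one has $[e_1,e_4]=0$, so $e_4$ is central and $\Lg_5(-1)^{(2)}=\langle e_4\rangle\subseteq Z(\Lg_5(-1))$, hence every term $[z,[[z,x],[y,w]]]$ of \eqref{3} vanishes; for $\al=0$ the algebra is even metabelian, as you note. Conversely, the failure for $\al\neq 0,-1$ (and for $\Lg_3$) needs only the weaker identity \eqref{4} at a single basis triple: in $\Lg_5(\al)$ one computes $[e_1,[[e_1,e_2],[e_1,e_3]]]=[e_1,[e_2,e_2+\al e_3]]=\al(\al+1)e_4$, which is nonzero exactly when $\al\neq 0,-1$, and in $\Lg_3$ the same triple gives $[e_1,[e_2,e_3]]=2e_4\neq 0$; since $\eqref{3}\Rightarrow\eqref{4}$, identity \eqref{3} fails there. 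With these two one-line observations your outline is a complete proof, and arguably a more informative one than the paper's table.
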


There is a trivial reason in low dimensions, why these identities are often satisfied. Every center-by-metabelian
Lie algebra $\Lg$ satisfies identity  \eqref{3} and  \eqref{4}. By definition, center-by-metabelian means
that $\Lg^{(2)}\subseteq Z(\Lg)$. This immediately implies that every term in \eqref{3} is zero.
Indeed, all low-dimensional nilpotent Lie algebras are  center-by-metabelian. More precisely, we have the following
result:

\begin{prop}
Every nilpotent Lie algebra $\Lg$ of dimension $n\le 7$ over a field of characteristic zero
is center-by-metabelian, and hence satisfies identity \eqref{3} and \eqref{4}.
\end{prop}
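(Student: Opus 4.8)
The plan is to prove the equivalent statement that every nilpotent Lie algebra $\Lg$ with $\dim\Lg\le 7$ over a field of characteristic zero satisfies $\Lg^{(2)}\subseteq Z(\Lg)$; the passage to \eqref{3} and \eqref{4} is the implication already recorded before the proposition, since every summand of \eqref{3} has its inner bracket in $\Lg^{(2)}$ and is killed by the outer $[z,-]$. I would first record the general inclusion $\Lg^{(2)}=[\Lg^1,\Lg^1]\subseteq \Lg^3$, which follows from $[\Lg^i,\Lg^j]\subseteq \Lg^{i+j+1}$ (one application of the Jacobi identity). Hence $[\Lg^{(2)},\Lg]\subseteq \Lg^4$, and the claim is immediate whenever $c(\Lg)\le 4$. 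This settles all $\Lg$ with $\dim\Lg\le 5$ and reduces the problem to nilpotency classes $5$ and $6$ in dimensions $6$ and $7$.

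A dimension count then pins down the survivors. Since $\Lg$ is generated by any complement of $\Lg^1$, the associated graded algebra $\mathrm{gr}(\Lg)=\bigoplus_i \Lg^i/\Lg^{i+1}$ is generated in degree $1$, so $\dim(\Lg^1/\Lg^2)\le \binom{d}{2}$ with $d=\dim(\Lg/\Lg^1)$. Combining this with $\dim\Lg\ge d+(c(\Lg)-1)$, one checks that for $\dim\Lg\le 7$ and $c(\Lg)\ge 5$ necessarily $d\le 3$ and $\dim(\Lg^1/\Lg^2)=1$. A one-dimensional space brackets to zero, so the $\Lg^3/\Lg^4$-component of $\Lg^{(2)}=[\Lg^1,\Lg^1]$ vanishes and in fact $\Lg^{(2)}\subseteq \Lg^4$: any product landing in $\Lg^3\setminus\Lg^4$ must come from two factors in $\Lg^1\setminus\Lg^2$, whose leading symbols lie in the $1$-dimensional $\Lg^1/\Lg^2$. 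For class $5$ this gives $\Lg^{(2)}\subseteq \Lg^4=\Lg^{c(\Lg)-1}$, whence $[\Lg^{(2)},\Lg]\subseteq \Lg^5=0$; this finishes dimension $6$ and the class-$5$ part of dimension $7$.

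The only remaining case is the filiform algebra of dimension $7$ (class $6$), where $Z(\Lg)=\Lg^5=\langle e_7\rangle$ and I must improve $\Lg^{(2)}\subseteq\Lg^4$ to $\Lg^{(2)}\subseteq\Lg^5$. By degree the only generator bracket of $\Lg^{(2)}$ that can escape $\Lg^5$ is $[e_3,e_4]$, whose $\Lg^4/\Lg^5$-component is the structure constant $c_{34}$ of the naturally graded filiform $\mathrm{gr}(\Lg)$, written in a basis with $[\ov e_1,\ov e_i]=\ov e_{i+1}$ and $[\ov e_i,\ov e_j]=c_{ij}\ov e_{i+j-1}$ for $i,j\ge 2$. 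The main work is a short computation in $\mathrm{gr}(\Lg)$: the derivation relations $c_{ij}=c_{i+1,j}+c_{i,j+1}$ coming from $\ad\ov e_1$ give $c_{24}=c_{23}$, $c_{35}=c_{34}$, $c_{25}=c_{23}-c_{34}$ and $c_{26}=c_{23}-2c_{34}$, while the single Jacobi identity on $(\ov e_2,\ov e_3,\ov e_4)$ reads $c_{24}c_{35}=c_{34}c_{26}$; substituting yields $2c_{34}^2=0$, so $c_{34}=0$ in characteristic $\ne 2$. Therefore $[e_3,e_4]\in\Lg^5=Z(\Lg)$ and $\Lg^{(2)}\subseteq Z(\Lg)$, completing the argument.

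I expect the class-$6$ filiform step to be the main obstacle, and it is exactly here that dimension $7$ is sharp. In an $8$-dimensional filiform the same computation still gives $c_{34}=0$, but now $Z(\Lg)=\Lg^6\subsetneq\Lg^5$, and the genuine (non-graded) bracket $[e_3,e_4]$ can land in $\Lg^5\setminus\Lg^6$, exactly as in Example \ref{4.13}, so $\Lg^{(2)}$ fails to be central. The delicate point to get right is that passing to $\mathrm{gr}(\Lg)$ only controls leading symbols: one must argue that the vanishing of $c_{34}$ forces the \emph{full} bracket $[e_3,e_4]$ into $\Lg^5$, which is legitimate in dimension $7$ precisely because $\Lg^5$ there coincides with the centre.
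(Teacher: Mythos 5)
Your proof is correct, but it follows a genuinely different route from the paper's. The paper argues via the derived series and a dimension count taken from Bokut \cite{BOK}: if $\Lg^{(2)}\neq 0$ then $\dim \Lg/\Lg^{(1)}\ge 2$ and $\dim \Lg^{(1)}/\Lg^{(2)}\ge 3$; moreover either $\dim\Lg^{(1)}/\Lg^{(2)}=3$, which by \cite{BOK} forces $\dim\Lg^{(2)}\le 1$, or $\dim\Lg^{(1)}/\Lg^{(2)}\ge 4$ and then $n\le 7$ forces $\dim\Lg^{(2)}\le 1$ directly; finally, since a nonzero ideal of a nilpotent Lie algebra meets the center nontrivially, the at most one-dimensional $\Lg^{(2)}$ must lie in $Z(\Lg)$. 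You instead work with the lower central series: the weight estimate $\Lg^{(2)}\subseteq \Lg^3$ disposes of class $\le 4$; the generation-in-degree-one bound $\dim(\Lg^1/\Lg^2)\le\binom{d}{2}$ together with the dimension count forces $\dim(\Lg^1/\Lg^2)=1$ whenever $c(\Lg)\ge 5$ and $n\le 7$, which pushes $\Lg^{(2)}$ into $\Lg^4$ and settles class $5$; and the one remaining case, the $7$-dimensional filiform of class $6$, is killed by the explicit computation in the associated graded algebra giving $2c_{34}^2=0$, hence $[e_3,e_4]\in\Lg^5\subseteq Z(\Lg)$ (I checked the derivation relations, the Jacobi identity on $(\ov e_2,\ov e_3,\ov e_4)$, and the symbol argument; they are all right, and note you only need the inclusion $\Lg^5\subseteq Z(\Lg)$, which is automatic from $\Lg^6=0$, not the equality). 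The trade-offs: the paper's proof is much shorter but uses Bokut's inequalities as a black box, and it yields the stronger intermediate fact $\dim\Lg^{(2)}\le 1$; yours is self-contained and elementary, and it has the additional merit of locating exactly where dimension $7$ is sharp — the graded obstruction $c_{34}$ vanishes even in dimension $8$, but there the center drops below $\Lg^5$, which is precisely the failure exhibited by Example \ref{4.13}. The passage from $\Lg^{(2)}\subseteq Z(\Lg)$ to identities \eqref{3} and \eqref{4} is the same termwise-vanishing observation in both arguments.
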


\begin{proof}
The claim follows from results in \cite{BOK}. We have $\Lg^{(3)}=0$ because of $n\le 7$.
Suppose that $\Lg^{(2)}\neq 0$. Then  $\dim \Lg/\dim \Lg^{(1)}\ge 2$,
$\dim \Lg^{(1)}/\dim \Lg^{(2)}\ge 3$, and $n\ge 6$. Moreover, if $\dim \Lg^{(1)}/\dim \Lg^{(2)}= 3$, then
$\dim \Lg^{(2)}\le 1$, see \cite{BOK}. Otherwise we have $\dim \Lg^{(1)}/\dim \Lg^{(2)}\ge 4$ and
\begin{align*}
n & =\dim \Lg/\dim \Lg^{(1)} + \dim \Lg^{(1)}/\dim \Lg^{(2)} + \dim \Lg^{(2)} \\
  & \ge 6+  \dim \Lg^{(2)}
\end{align*}
This gives again $\dim \Lg^{(2)}\le 1$.
Because $\Lg$ is nilpotent, $\Lg^{(2)}\cap Z(\Lg)\neq 0$, and hence $\Lg^{(2)}\subseteq Z(\Lg)$.
\end{proof}

The result does not hold in higher dimensions. Indeed, the
$8$-dimensional nilpotent Lie algebra of Example $\ref{4.13}$ is not center-by-metabelian.
It does not satisfy identity \eqref{3} or \eqref{4}. Recall that  a Lie algebra $\Lg$ satisfying 
identity \eqref{3}, or \eqref{4} need not be center-by-metabelian, e.g., consider $\Ls\Ll_2(K)$.

\section{Tables}

1.) Complex Lie algebras of dimension $n\le 4$:
\vspace*{0.5cm}
\begin{center}
\begin{tabular}{c|c|c|c|c|c}
$\Lg$ & Lie brackets & \eqref{1} & \eqref{2} & \eqref{3} & \eqref{4} \\
\hline
$\Lr_2(\C)$ & $[e_1,e_2]=e_2$ & $\checkmark$ & $\checkmark$ & $\checkmark$ & $\checkmark$ \\
$\Ln_3(\C)$ & $[e_1,e_2]=e_3$  & $\checkmark$ & $\checkmark$ & $\checkmark$ & $\checkmark$ \\
$\Lr_{3,\la}(\C)$ & $[e_1,e_2]=e_2,\, [e_1,e_3]=\la e_3$  & $\checkmark$ & $\checkmark$ & $\checkmark$ & $\checkmark$ \\
$\Ls\Ll_2(\C)$  &  $[e_1,e_2]=e_3,\, [e_1,e_3]= -2e_1,\,[e_2,e_3]= 2e_2$ & $\checkmark$ & $-$ & $\checkmark$ & $\checkmark$ \\
$\Ln_3(\C)\oplus \C$ & $[e_1,e_2]=e_3$ & $\checkmark$ & $\checkmark$ & $\checkmark$ & $\checkmark$ \\
$\Ln_4(\C)$ & $[e_1,e_2]=e_3$,\, $[e_1,e_3]=e_4$ & $\checkmark$ & $\checkmark$ & $\checkmark$ & $\checkmark$ \\
$\Lr_2(\C)\oplus \C^2$ & $[e_1,e_2]=e_2$ & $\checkmark$ & $\checkmark$ & $\checkmark$ & $\checkmark$ \\
$\Lr_2(\C)\oplus \Lr_2(C)$ & $[e_1,e_2]=e_2,\, [e_3,e_4]=e_4$  & $\checkmark$ & $\checkmark$ & $\checkmark$ & $\checkmark$ \\
$\Ls\Ll_2(\C)\oplus \C$ & $[e_1,e_2]=e_3,\, [e_1,e_3]= -2e_1,\,[e_2,e_3]= 2e_2 $ & $\checkmark$ & $-$ 
                        & $\checkmark$ & $\checkmark$ \\
$\Lg_1$ & $[e_1,e_2]=e_2, \,[e_1,e_3]=e_3,\,[e_1,e_4]=e_4$ & $\checkmark$ & $\checkmark$ & $\checkmark$ & $\checkmark$ \\
$\Lg_2(\al)$ & $[e_1,e_2]=e_2, \,[e_1,e_3]=e_3,\,[e_1,e_4]=e_3+\al e_4$ & $\checkmark$ & $\checkmark$ & $\checkmark$ 
             & $\checkmark$ \\
$\Lg_3$ & $[e_1,e_2]=e_2, \,[e_1,e_3]=e_3,\,[e_1,e_4]=2e_4,\,[e_2,e_3]=e_4$ & $-$ & $-$ & $-$ & $-$ \\
$\Lg_4(\al,\be)$ & $[e_1,e_2]=e_2, \,[e_1,e_3]=e_2+\al e_3,\,[e_1,e_4]=e_3+\be e_4$ & $\checkmark$ & $\checkmark$ 
                 & $\checkmark$ & $\checkmark$ \\
$\Lg_5(\al),\, \al\neq 0,-1$ & $[e_1,e_2]=e_2, \,[e_1,e_3]=e_2+\al e_3,$ & $-$ & $-$ & $-$ & $-$ \\
           & $[e_1,e_4]=(\al +1)e_4,\,[e_2,e_3]=e_4$ & & & & \\
$\al=0,-1$ & & $-$ & $-$ & $\checkmark$ & $\checkmark$ \\
\end{tabular}
\end{center}
\vspace*{0.5cm}
2.) Indecomposable complex nilpotent Lie algebras of dimension $7$, see \cite{MAG}: 
\vspace*{0.5cm}
\begin{center}
\begin{tabular}{c|c|c|c|c|c|c|c|c|c|c|c|c}
Identity & $\Lg_{0.1}$ & $\Lg_{0.2}$ & $\Lg_{0.3}$ & $\Lg_{0.4(\lambda)}$ & $\Lg_{0.5}$ & $\Lg_{0.6}$ & $\Lg_{0.7}$ 
& $\Lg_{0.8}$ & $\Lg_{1.01(i)}$ & $\Lg_{1.01(ii)}$  & $\Lg_{1.02}$ & $\Lg_{1.03}$ \\
\hline
\eqref{1} & $\checkmark$  & $\checkmark$  & $\checkmark$  & $\checkmark$  & $\checkmark$ & $\checkmark$ 
& $\checkmark$ & $\checkmark$ & $\checkmark$ & $\checkmark$ & $-$ & $-$ \\
\eqref{2} & $-$  & $\checkmark$  & $\checkmark$  & $-$  & $-$ & $-$ 
& $-$ & $-$ & $\checkmark$ & $\checkmark$ & $-$ & $-$ \\
\end{tabular} 
\end{center}
\vspace*{0.5cm}
\begin{center}
\begin{tabular}{c|c|c|c|c|c|c|c|c|c}
Identity & $\Lg_{1.1(i_{\la})}$ & $\Lg_{1.1(ii)}$ & $\Lg_{1.1(iii)}$ & $\Lg_{1.1(iv)}$ & $\Lg_{1.1(v)}$ & $\Lg_{1.1(vi)}$ 
& $\Lg_{1.2(i_{\la\neq 1})}$ & $\Lg_{1.2(i_{\la=1})}$ & $\Lg_{1.2(ii)}$ \\
\hline
\eqref{1} & $-$  & $-$ & $-$ & $-$ & $-$ & $-$ &  $-$ & $\checkmark$  & $-$ \\
\eqref{2} & $-$  & $-$ & $-$ & $-$ & $-$ & $-$ &  $-$ & $\checkmark$  & $-$ \\
\end{tabular} 
\end{center}
\vspace*{0.5cm}
\begin{center}
\begin{tabular}{c|c|c|c|c|c|c|c|c|c|c|c}
Identity & $\Lg_{1.2(iii)}$ & $\Lg_{1.2(iv)}$ & $\Lg_{1.3(i_{\la})}$ & $\Lg_{1.3(ii)}$ & $\Lg_{1.3(iii)}$ & $\Lg_{1.3(iv)}$ 
& $\Lg_{1.3(v)}$ & $\Lg_{1.4}$ & $\Lg_{1.5}$ & $\Lg_{1.6}$ & $\Lg_{1.7}$ \\
\hline
\eqref{1} & $-$  & $-$  & $-$  & $-$ & $\checkmark$ & $\checkmark$ & $-$ & $-$ & $-$ & $\checkmark$ &  $\checkmark$  \\
\eqref{2} & $-$  & $-$  & $-$  & $-$ & $\checkmark$ & $\checkmark$ & $-$ & $-$ & $-$ & $\checkmark$ &  $\checkmark$  \\
\end{tabular} 
\end{center}
\vspace*{0.5cm}
\begin{center}
\begin{tabular}{c|c|c|c|c|c|c|c|c|c|c|c|c}
Identity & $\Lg_{1.8}$ & $\Lg_{1.9}$ & $\Lg_{1.10}$ & $\Lg_{1.11}$ & $\Lg_{1.12}$ & $\Lg_{1.13}$ 
& $\Lg_{1.14}$ & $\Lg_{1.15}$ & $\Lg_{1.16}$ & $\Lg_{1.17}$ & $\Lg_{1.18}$  & $\Lg_{1.19}$ \\
\hline
\eqref{1} & $\checkmark$  & $\checkmark$  & $\checkmark$  & $-$  & $-$ &  $-$ & $-$ & $\checkmark$ & $\checkmark$ 
& $-$ & $\checkmark$ & $\checkmark$ \\
\eqref{2} & $\checkmark$  & $\checkmark$  & $\checkmark$  & $-$  & $-$ &  $-$ & $-$ & $\checkmark$ & $\checkmark$ 
& $-$ & $\checkmark$ & $\checkmark$ \\
\end{tabular} 
\end{center}
\vspace*{0.5cm}
\begin{center}
\begin{tabular}{c|c|c|c|c|c|c|c|c|c|c|c|c}
Identity & $\Lg_{1.20}$ & $\Lg_{1.21}$ & $\Lg_{2.1(i_{\la})}$ & $\Lg_{2.1(ii)}$ & $\Lg_{2.1(iii)}$ & $\Lg_{2.1(iv)}$ 
& $\Lg_{2.1(v)}$ & $\Lg_{2.2}$ & $\Lg_{2.3}$ & $\Lg_{2.4}$ & $\Lg_{2.5}$ & $\Lg_{2.6}$ \\
\hline
\eqref{1} & $-$  & $-$  & $-$  & $-$  & $-$ &  $-$ & $-$ & $-$ & $\checkmark$ & $-$ & $-$ & $-$ \\
\eqref{2} & $-$  & $-$  & $-$  & $-$  & $-$ &  $-$ & $-$ & $-$ & $\checkmark$ & $-$ & $-$ & $-$ \\
\end{tabular} 
\end{center}
\vspace*{0.5cm}
\begin{center}
\begin{tabular}{c|c|c|c|c|c|c|c|c|c|c|c|c}
Identity & $\Lg_{2.7}$ & $\Lg_{2.8}$ & $\Lg_{2.9}$ & $\Lg_{2.10}$ & $\Lg_{2.11}$ & $\Lg_{2.12}$ 
& $\Lg_{2.13}$ & $\Lg_{2.14}$ & $\Lg_{2.15}$ & $\Lg_{2.16}$ & $\Lg_{2.17}$ & $\Lg_{2.18}$ \\
\hline
\eqref{1} & $\checkmark$ & $-$  & $\checkmark$ & $-$  & $\checkmark$ & $\checkmark$ & $-$ & $-$ & $-$ & $\checkmark$ 
& $-$ & $\checkmark$ \\
\eqref{2} & $\checkmark$ & $-$  & $\checkmark$ & $-$  & $\checkmark$ & $\checkmark$ & $-$ & $-$ & $-$ & $\checkmark$ 
& $-$ & $\checkmark$ \\
\end{tabular} 
\end{center}
\vspace*{0.5cm}
\begin{center}
\begin{tabular}{c|c|c|c|c|c|c|c|c|c|c|c|c}
Identity & $\Lg_{2.19}$ & $\Lg_{2.20}$ & $\Lg_{2.21}$ & $\Lg_{2.22}$ & $\Lg_{2.23}$ & $\Lg_{2.24}$ 
& $\Lg_{2.25}$ & $\Lg_{2.26}$ & $\Lg_{2.27}$ & $\Lg_{2.28}$ & $\Lg_{2.29}$ & $\Lg_{2.30}$ \\
\hline
\eqref{1} & $\checkmark$ & $\checkmark$ & $\checkmark$ & $\checkmark$ & $\checkmark$ & $-$ & $-$ & $-$ & $\checkmark$ 
& $\checkmark$ & $\checkmark$ & $\checkmark$ \\
\eqref{2} & $\checkmark$ & $\checkmark$ & $\checkmark$ & $\checkmark$ & $\checkmark$ & $-$ & $-$ & $-$ & $\checkmark$ 
& $\checkmark$ & $\checkmark$ & $\checkmark$ \\
\end{tabular} 
\end{center}
\vspace*{0.5cm}
\begin{center}
\begin{tabular}{c|c|c|c|c|c|c|c|c|c|c|c|c}
Identity & $\Lg_{2.31}$ & $\Lg_{2.32}$ & $\Lg_{2.33}$ & $\Lg_{2.34}$ & $\Lg_{2.35}$ & $\Lg_{2.36}$ 
& $\Lg_{2.37}$ & $\Lg_{2.38}$ & $\Lg_{2.39}$ & $\Lg_{2.40}$ & $\Lg_{2.41}$ & $\Lg_{2.42}$ \\
\hline
\eqref{1} & $\checkmark$ & $\checkmark$ & $\checkmark$ & $\checkmark$ & $-$ & $\checkmark$ & $-$ & $\checkmark$ & $\checkmark$ 
& $\checkmark$ & $-$ & $\checkmark$ \\
\eqref{2} & $\checkmark$ & $\checkmark$ & $\checkmark$ & $\checkmark$ & $-$ & $\checkmark$ & $-$ & $\checkmark$ & $\checkmark$ 
& $\checkmark$ & $-$ & $\checkmark$ \\
\end{tabular} 
\end{center}
\vspace*{0.5cm}
\begin{center}
\begin{tabular}{c|c|c|c|c|c|c|c|c|c|c|c|c}
Identity & $\Lg_{2.43}$ & $\Lg_{2.44}$ & $\Lg_{2.45}$ & $\Lg_{3.1(i_{\la})}$ & $\Lg_{3.1(iii)}$ & $\Lg_{3.2}$ 
& $\Lg_{3.3}$ & $\Lg_{3.4}$ & $\Lg_{3.5}$ & $\Lg_{3.6}$ & $\Lg_{3.7}$ & $\Lg_{3.8}$ \\
\hline
\eqref{1} & $\checkmark$ & $\checkmark$ & $\checkmark$ & $-$ & $-$ & $\checkmark$ & $\checkmark$ & $\checkmark$ & $-$ 
& $\checkmark$ & $\checkmark$ & $\checkmark$ \\
\eqref{2} & $\checkmark$ & $\checkmark$ & $\checkmark$ & $-$ & $-$ & $\checkmark$ & $\checkmark$ & $\checkmark$ & $-$ 
& $\checkmark$ & $\checkmark$ & $\checkmark$ \\
\end{tabular} 
\end{center}
\vspace*{0.5cm}
\begin{center}
\begin{tabular}{c|c|c|c|c|c|c|c|c|c|c|c|c}
Identity & $\Lg_{3.9}$ & $\Lg_{3.10}$ & $\Lg_{3.11}$ & $\Lg_{3.12}$ & $\Lg_{3.13}$ & $\Lg_{3.14}$ 
& $\Lg_{3.15}$ & $\Lg_{3.16}$ & $\Lg_{3.17}$ & $\Lg_{3.18}$ & $\Lg_{3.19}$ & $\Lg_{3.20}$ \\
\hline
\eqref{1} & $\checkmark$ & $-$ & $\checkmark$ & $\checkmark$ & $\checkmark$ & $\checkmark$ & $\checkmark$ & $\checkmark$ 
& $\checkmark$ & $\checkmark$ & $\checkmark$ & $\checkmark$ \\
\eqref{2} & $\checkmark$ & $-$ & $\checkmark$ & $\checkmark$ & $\checkmark$ & $\checkmark$ & $\checkmark$ & $\checkmark$ 
& $\checkmark$ & $\checkmark$ & $\checkmark$ & $\checkmark$ \\
\end{tabular} 
\end{center}
\vspace*{0.5cm}
\begin{center}
\begin{tabular}{c|c|c|c|c|c|c|c|c}
Identity & $\Lg_{3.21}$ & $\Lg_{3.22}$ & $\Lg_{3.23}$ & $\Lg_{3.24}$ & $\Lg_{4.1}$ & $\Lg_{4.2}$ 
& $\Lg_{4.3}$ & $\Lg_{4.4}$ \\
\hline
\eqref{1} & $\checkmark$ & $-$ & $\checkmark$ & $\checkmark$ & $\checkmark$ & $\checkmark$ & $\checkmark$ & $\checkmark$ \\
\eqref{2} & $\checkmark$ & $-$ & $\checkmark$ & $\checkmark$ & $\checkmark$ & $\checkmark$ & $\checkmark$ & $\checkmark$ \\
\end{tabular} 
\end{center}

\section{Acknowledgment}

The author would like to thank the referee for helpful remarks.


\begin{thebibliography}{99}

\bibitem{AGM} D. Alekseevsky, J. Grabowski, G. Marmo, P. Michor: {\it Poisson structures on double Lie groups}.
J.\ Geom.\ Phys.\ \textbf{26} (1998), no. 3-4, 340--379. 

\bibitem{BED} A. A. Belavin, V. G. Drinfeld: {\it Solutions of the classical Yang-Baxter equation 
and simple Lie algebras}, Funct.\ Anal.\ Appl.\ \textbf{16} (1982) 159-–180.

\bibitem{BOK} L. A. Bokut: {\it On nilpotent Lie algebras}. Algebra and Logic \textbf{10} (1971), 135--168.

\bibitem{BU22} D. Burde, K. Dekimpe: {\it Novikov structures on solvable Lie algebras}.
J.\ Geom.\ Phys.\ \textbf{56} (2006), no. 9, 1837--1855.

\bibitem{BU24} D. Burde: {\it Left-symmetric algebras, or pre-Lie algebras in geometry
and physics}. Central European Journal of Mathematics \textbf{4} (2006), no. 3, 323--357.

\bibitem{BU33} D. Burde, K. Dekimpe and S. Deschamps: {\it Affine actions on nilpotent
Lie groups}. Forum Math.\ \textbf{21} (2009), no. 5, 921--934.

\bibitem{BU34} D. Burde, K. Dekimpe and S. Deschamps: {\it LR-algebras}.
Contemporary Mathematics \textbf{491} (2009), 125--140.

\bibitem{BU38} D. Burde, K. Dekimpe, K. Vercammen: {\it Complete LR-structures on solvable
Lie algebras}. Journal of group theory  \textbf{13}, no. $5$, 703--719 (2010).

\bibitem{BU41} D. Burde, K. Dekimpe and K. Vercammen: {\it Affine actions on Lie groups
and post-Lie algebra structures}.
Linear Algebra and its Applications \textbf{437} (2012), no. 5, 1250--1263.

\bibitem{BU44} D. Burde, K. Dekimpe: {\it Post-Lie algebra structures and generalized
derivations of semisimple Lie algebras}.
Moscow Mathematical Journal, Vol. \textbf{13}, Issue 1, 1--18 (2013).

\bibitem{COH} A. M. Cohen, G. Ivanyos, D. Roozemond: {\it Simple Lie algebras having extremal 
elements}. Indag.\ Math.\ \textbf{19} (2008), no. 2, 177--188. 

\bibitem{DRE} V. Drensky: {\it $\CT$-ideals containing all matrix polynomial identities}.
Commun.\ Algebra \textbf{13}, no. 9 (1985), 2037--2072.

\bibitem{FIL1} V. T. Filippov: {\it On a variety of Mal'tsev algebras}. 
Algebra and Logic \textbf{20} (1981), 200--210. 

\bibitem{FIL2} V. T. Filippov: {\it Lie algebras satisfying identities of degree $5$}. 
Algebra and Logic \textbf{34}, no. 6, (1995), 379--394. 

\bibitem{LOD} J.-L. Loday: {\it Generalized bialgebras and triples of operads}.
Astérisque  No. \textbf{320}  (2008), 116 pp. 

\bibitem{MAG} L. Magnin: {\it Determination of 7-dimensional indecomposable nilpotent complex Lie
algebras by adjoining a derivation to 6-dimensional Lie algebras}. 
Algebras and Representation Theory \textbf{13}, no. 6 (2010), 723--753.

\bibitem{MAS} A. Makhlouf, S. D. Silvestrov: {\it Hom-algebra structures}.
J.\ Gen.\ Lie Theory and Appl.\  \textbf{2}, No. 2 (2008), 51--64.  

\bibitem{RAZ} Y. P. Razmyslov: {\it The existence of a finite basis for the identities of the
matrix algebra of order two over a field of characteristic zero}. 
Algebra and Logic \textbf{12} (1973), 47--63.

\bibitem{SEM} M. A. Semenov-Tyan-Shanskii: {\it What is a classical $R$-matrix ?}
Funktsional.\ Anal.\ i Prilozhen. \textbf{17} (1983), no. 4, 17--33.

\bibitem{VAL} B. Vallette: {\it Homology of generalized partition posets}.
J.\ Pure and Applied Algebra \textbf{208}, no. 2 (2007), 699--725. 

\bibitem{ZUS} P. Zusmanovich: {\it On $\de$-derivations of Lie algebras and superalgebras}.
J.\ Algebra \textbf{324} (2010), no. 12, 3470--3486. 


\end{thebibliography}
\end{document}